\def\newthm#1#2{\newtheorem{#1}[dummy]{#2}%
  \expandafter\def\csname#2\endcsname##1{\hyperref[#1:##1]{#2~\ref*{#1:##1}}}}
\theoremstyle{definition}
\newcommand{\Section}[1]{\hyperref[sec:#1]{Section~\ref*{sec:#1}}}
\newcommand{\Table}[1]{\hyperref[tab:#1]{Table~\ref*{tab:#1}}}
\newcommand{\eqn}[1]{\hyperref[eqn:#1]{(\ref*{eqn:#1})}}
\newcommand{\Cor}[1]{\hyperref[cor:#1]{Corollary~\ref*{cor:#1}}}
\DeclareMathOperator{\Gr}{Gr}
\DeclareMathOperator{\LG}{LG}
\DeclareMathOperator{\OG}{OG}
\DeclareMathOperator{\diag}{diag}
\newcommand{\bP}{{\mathbb P}}
\newcommand{\ev}{\operatorname{ev}}
\newcommand{\wb}{\overline}
\newcommand{\pic}[2]{\includegraphics[scale=#1]{#2}}
\newcommand{\ignore}[1]{}
\newcommand{\Mb}{\wb{\mathcal M}}
\begin{document}

\title[]{A dual basis for the equivariant quantum $K$-theory of cominuscule varieties}

\author{Kevin Summers}
\email{summerskb9@gmail.com}

\subjclass[2020]{Primary 14N35, 14M15; Secondary 05E14}
\keywords{Quantum K theory, cominuscule flag varieties, ideal sheaves of Schubert varieties.}

\date{July 31, 2024}

\begin{abstract} 
 The equivariant quantum $K$-theory ring of a flag variety is a Frobenius algebra equipped with a perfect pairing called the quantum $K$-metric. 
 It is known that in the classical $K$-theory ring for a given flag variety the ideal sheaf basis is dual to the Schubert basis with regard to the sheaf Euler characteristic. 
 We define a quantization of the ideal sheaf basis for the equivariant quantum $K$-theory of cominuscule flag varieties. 
 These quantized ideal sheaves are then dual to the Schubert basis with regard to the quantum $K$-metric. 
 We prove explicit type-uniform combinatorial formulae for the quantized ideal sheaves in terms of the Schubert basis for any cominuscule flag variety. 
 We also provide an application ultilizing the quantized ideal sheaves to calculate the Schubert structure constants associated to multiplication by the top exterior power of the tautological quotient bundle in $QK_T(Gr(k,n))$. 
\end{abstract}

\maketitle

\section{Introduction}\label{sec:intro} Let $X=G/P$ be a flag variety given by a semisimple Lie group $G$ and a parabolic subgroup $P$.
The structure sheaves of the associated Schubert varieties provide a basis for both the $K$-theory ring $K(X)$, and the (small) equivariant quantum $K$-theory ring $QK_T(X)$ defined by Givental \cite{givental:wdvv}. 
This ring $QK_T(X)$ generalizes $K(X)$ in that calculations within $QK_T(X)$ recover analogous calculations in $K(X)$ after taking appropriate restrictions. 
One such calculation is the non-degenerate bilinear pairing given by the sheaf Euler characteristic of the product of two $K$-theory classes. 
It has been proven by Brion \cite{brion:positivity} that the basis dual to the Schubert basis under this pairing will be the ideal sheaves of the boundary of Schubert varieties. 
This pairing generalizes to the quantum $K$-metric on the equivariant quantum $K$-theory ring.
It follows that there must be a quantum analogue of the ideal sheaf dual basis with respect to this new pairing. 
It is then a natural question to try to understand this dual basis and how to write its elements in terms of the Schubert classes of the space. In the cohomology of flag varieties the basis of Schubert classes is dual to the basis of opposite Schubert classes with respect to the Poincare pairing. 
This duality extends to the quantum cohomology ring and as such this problem is already completely understood in this setting. 
The ideal sheaf basis dual to the Schubert basis in the classical $K$-theory ring has been used to discuss the structure constants that appear in the multiplication of two Schubert classes, and has been used by Brion to prove the positivity result of said structure constants \cite{brion:positivity}. 
More general structure constants for other products in $K(X)$ can be found by using these dual classes. 
We give an example of such an application in the quantum $K$-theory ring using the quantum analogue of the ideal sheaf basis in \Section{application}. 
These quantized ideal sheaves have implicitly been studied in \cite{buch.mihalcea:quantum}, but only in the Type A Grassmannian, $Gr(k,n)$. These classes dual to a Schubert class in the quantum $K$-theory ring have also been discussed in \cite[Eq. 5.42]{gorbounov.korff:integrability} within the framework of integrable systems. 
The main result of this paper is a combinatorial type-uniform formula for the quantized ideal sheaf dual basis in $QK_T(X)$ when $X$ is a cominuscule flag variety. 
A Grassmann variety $Gr(k, n)$ of type A, a Lagrangian Grassmannian $LG(n, 2n)$, a maximal orthogonal Grassmannian $OG(n, 2n)$, a quadric hypersurface $Q^n$, and two exceptional varieties called the Cayley plane $E_6/P_6$ and the Freudenthal variety $E_7/P_7$ are all such cominuscule varieties. 
The Schubert classes of a cominuscule variety can be indexed by diagrams of boxes that generalize the Young diagrams known from the Schubert calculus of classical Grassmannians. 
Notably, we give a multiplication-free formula for writing the quantized ideal sheaf basis in terms of Schubert classes that is completely determined by combinatorial diagrams of boxes.

\subsection{Statement of Results} Let $X=G/P$ be a cominuscule flag variety. 
Here $G$ is a complex semisimple Lie group and $P$ is a maximal parabolic subgroup. The root system of $X$ then has a unique cominuscule root $\gamma$ and $s_\gamma\in W^P$ is the corresponding simple reflection. 
Let $T$ be the associated algebraic torus, and let $QK_T(X)$ denote the $T$-equivariant quantum $K$ ring of $X$, defined by Givental and Lee \cite{givental:wdvv, lee:quantum}.
In the classical $T$-equivariant $K$ ring, $K^T(X)$, the ideal sheaf classes, $\mathcal{I}^\lambda$, form the basis dual to the Schubert basis, $\mathcal{O}_\lambda$, with regard to the sheaf Euler characteristic.
The main results of this paper are combinatorial formulae for the quantum deformation of this dual basis in $QK_T(X)$. 

\begin{thm}\label{thm:result1} Let $\mathcal{I}_q^\mu=\alpha^{\mu}\star(1-\mathcal{O}^{s_\gamma})$ where 
$$\alpha^\mu=\sum_{\epsilon/\mu \text{ short skew shape}} \frac{\sqrt{J_\mu J\epsilon}}{J_\mu J_\epsilon}\mathcal{O^\epsilon}.$$ Then the collection $\{\mathcal{I}_q^\mu \ | \ \mu \in W^P\}$ forms the dual basis with respect to the Schubert basis, $\{\mathcal{O}_\lambda \ | \ \lambda \in W^P\}$, under the quantum $K$-metric for $QK_T(X)$ when $X$ cominuscule, i.e. ,
\[ \left(\!\left( \mathcal{O}_\lambda, \mathcal{I}_q^\mu \right)\!\right) \/=\delta_{\lambda\mu}.\]
\end{thm}

The pairing in the theorem is the quantum $K$-metric which we define in \Section{Quantum1}. 
The $\frac{\sqrt{J_\mu J\epsilon}}{J_\mu J_\epsilon}$ coefficient for the Schubert class is an element of $K^T({\text{point}})$ that restricts to $1$ in the nonequivariant setting. 
See \Section{classical results} and \Lemma{weight} for a precise definition. 
The summation for $\alpha^\mu$ is indexed by $\epsilon\in W^P$ such that $\epsilon/\mu$ is a short skew shape, meaning a skew shape composed of boxes that correspond to short roots. 
Many of the cominuscule flag varieties are also minuscule, meaning their root systems will not contain short roots. 
In these cases this summation for $\alpha^\mu$ reduces to the single Schubert class, $\mathcal{O}^\mu$. 

The proof involves first proving that a very similar formula holds in $K^T(X)$ for $\mathcal{I}^{\mu}$. 
This formula for the classical ideal sheaves is proven by manipulating the Chevalley formula for $K^T(X)$ found in \cite{buch.chaput.ea:qkchev}. 
We then show that the Gromov Witten invariants of degree $d\geq 1$ involved in calculating $ \left(\!\left(\mathcal{O}_\lambda,\mathcal{I}_q^\mu\right)\!\right)$ all vanish. 
We conclude that the formula for $\mathcal{I}_q^\mu$ in $QK_T(X)$ is the same as that for $\mathcal{I}^\mu$ in $K^T(X)$, where we replace the classical product with the quantum product. 

We also give a formula for $\mathcal{I}_q^\mu$ that is multiplication-free. 
This allows one to write the quantized ideal sheaves in terms of Schubert classes by only making use of combinatorial diagrams. 
This result makes use of the Chevalley formula for $QK_T(X)$ from \cite{buch.chaput.ea:qkchev} which relies on the first curve neighborhood of a Schubert class. 
Let $\psi:K^T(X)\to K^T(X)$ be defined by taking the first curve neighborhood, $\psi(\mathcal{O}^\mu)=\mathcal{O}^{\mu(-1)}$. 
Given $\mu\in W^P$, there is a diagrammatic procedure to find $\mu(-1)\in W^P$ in the setting of cominuscule varieties. 
\begin{thm}\label{thm:result2} We have the following equality: 
 $\mathcal{I}^\mu_q=\mathcal{I}^{\mu}-q\psi\mathcal{I}^{\mu}.$ Furthermore, if $z_1\nleqslant \mu$, then $\psi\mathcal{I}^\mu=0$. If $z_1\leq \mu$ then $\psi\mathcal{I}^{\mu}=\mathcal{I}^{\mu(-1)}.$
\end{thm}
Here $z_1$ is a particular Weyl group element that plays a role in finding the first curve neighborhood of a Schubert variety. 
The element $z_1$ can be understood as the maximum partition such that $z_1(-1)=\varnothing$. 
See \Table{tablez1}.
When $\psi\mathcal{I}^{\mu}=0$ we then have that the ideal sheaf and the quantized ideal sheaf agree. 
Because there are known formulae for $\mathcal{I}^\mu$ and its curve neighborhood that only require combinatorial diagrams, this theorem allows one to also write $\mathcal{I}_q^\mu$ through only the use of these diagrams. We give a short example of this procedure.

\begin{example}
    Let $X=Gr(2,4)$, and let $\mu=(2,1)$. 
    \begin{center}
    \ytableausetup{smalltableaux}
        \ydiagram{2,1}
    \end{center}
    Then from \cite[Lemma 3.4]{buch.chaput.ea:qkchev}, $\mathcal{I}^{(2,1)}$ is given by the rook strips $\nu_i/\mu$ for $\nu_1=(2,1)$ and $\nu_2=(2,2)$.
    \begin{center}
        \ydiagram{2,1}
        \hspace{2mm}
        \ydiagram[*(white) \bullet]
    {0,1+1}
    *[*(white)]{2,1}
    \end{center}
    The box added to $\mu$ to get $\nu_2$ is marked with a dot. 
    We have that $$\mathcal{I}^{(2,1)}=\mathcal{O}^{(2,1)}-\mathcal{O}^{(2,2)}.$$
Taking the first curve neighborhood, $\nu_1(-1)=\varnothing$ and $\nu_2(-1)=(1)$.
\begin{center}
       \ydiagram{2,1}
    *[*(gray)]{2,1}
    $\xmapsto{\psi}$ 
    $\varnothing$
    \hspace{2mm}
    \ydiagram[*(white) \bullet]
    {0,1+1}
    *[*(gray)]{2,1}
    $\xmapsto{\psi}$ 
    \ydiagram[*(white) \bullet]
    {1}
    \ytableausetup{nosmalltableaux}
    \end{center}
The boxes in gray are removed when taking the first curve neighborhood. 
Thus $$\psi\mathcal{I}^{(2,1)}=\mathcal{O}^{\varnothing}-\mathcal{O}^{(1)}.$$ 
Putting these together we have that 
$$\mathcal{I}^{(2,1)}_q=\mathcal{O}^{(2,1)}-\mathcal{O}^{(2,2)}-q(\mathcal{O}^{\varnothing}-\mathcal{O}^{(1)}).$$
In this example $\mu=(2,1)=z_1$ in $Gr(2,4)$, so we know $\psi\mathcal{I}^{\mu}$ should agree with $\mathcal{I}^{\mu(-1)}$. If we calculate $\mathcal{I}^{\varnothing}$ we then find that $\varnothing/\varnothing$ and $(1)/\varnothing$ give the only rook strips. Thus 
$$\mathcal{I}^{(2,1)(-1)}=\mathcal{O}^{\varnothing}-\mathcal{O}^{(1)}=\psi\mathcal{I}^{(2,1)}.$$
\end{example}

We go on to give an application of our dual basis in order to find particular structure constants in $QK_T(X)$. 
Let $\mathcal{Q}$ denote the tautological quotient bundle of $X=Gr(k,n)$, the type A Grassmannian. 
\begin{thm}\label{thm:result3}
We have the following equality in $QK_T(Gr(k,n))$: 
$$\det\mathcal{Q}\star \mathcal{O}^\mu=\sum_{\lambda\in W^P}(\prod_{l=1}^{n-k}T_{j_l})q^{d(\mu,\lambda)}\mathcal{O}^\lambda$$
 where the $j_l$ correspond to the last $n-k$ entries of the one line notation for the Grassmannian permutation $w_\lambda$, and $d(\mu,\lambda)=dist_X(\mu,\lambda)$ is the minimal degree $d$ for which $q^d$ occurs in the product $X^\mu \star X_\lambda$ in the quantum cohomology ring of $X$. 
\end{thm}
The coefficient of $\mathcal{O}^\lambda$ is simply $q^{d(\mu,\lambda)}$ if we restrict to the nonequivariant setting. 
See \Section{application} for details on the equivariant terms.
We calculate this product in $QK_T(X)$ using the quantum $K$ metric and thus make use of the dual basis elements, $\mathcal{I}^q_\lambda$, to simplify the calculation. 
This specific result demonstrates the more general technique that can be applied to find other structure constants in $QK_T(X)$. 
We rely on a result from \cite{gu.mihalcea.sharpe.zou:quantum} for the product $\det\mathcal{S}\star\det\mathcal{Q}$ which has only been proven for $X=Gr(k,n)$. 
Here $\mathcal{S}$ is the tautological bundle on $X$. There is a conjectured formula for the product $\det\mathcal{S}\star\det\mathcal{Q}$ for the Lagrangian Grassmannian in Lie type C which can be found in \cite{gu.mihalcea.sharpe.zou:symplectic}.

We now give a short outline of the contents of this paper. 
\Section{prelim} provides preliminary definitions and results before we prove the main result.
This includes a discussion of the classical ideal sheaves and introduces equivariant quantum $K$-theory in detail. 
\Section{classical results} gives a new formula for the ideal sheaves in $K^T(X)$ which is later used to prove a very similar formula for the quantized ideal sheaves in $QK_T(X)$. 
We provide examples of how this new formula coincides with previously known calculations for the ideal sheaves.
\Section{quantum results} contains the formulae for the quantized ideal sheaves. 
We first give a necessary lemma, then prove \Theorem{result1} utilizing the results from the prior section. 
We proceed to use \Theorem{result1} along with the Chevalley formula for $QK_T(X)$ from \cite{buch.chaput.ea:qkchev} to prove \Theorem{result2}.
This second formula for the quantized ideal sheaves is easier to calculate in practice. 
We then discuss the combinatorics involved in writing the quantized ideal sheaves in terms of Schubert classes in a manner that does not require calculating any $K$-theory products. 
Examples of how the combinatorial diagrams are used in the formulae are included. 
\Section{application}  provides a brief example of how the quantized ideal sheaves can be useful when working with Schubert classes in $QK_T(X)$. 
This example demonstrates how the quantized ideal sheaves can be used to find structure constants in a way that does not depend on any recursive calculations. 
Because the technique used is very general, the calculation of \Theorem{result3} provides an indication of the usefulness of the quantized ideal sheaves within $QK_T(X)$.

The author was supported in part by the NSF grant DMS-2152294. 
Throughout, calculations in both classical $K$-theory and quantum $K$-theory utilize the Maple based \textbf{Equivariant Schubert Calculator}, written by Anders Buch, which is available at \textbf{https://sites.math.rutgers.edu/$\thicksim$asbuch/equivcalc/}.

\section{Preliminaries}\label{sec:prelim}

\subsection{Equivariant \texorpdfstring{$K$}{Lg}-theory.}\label{sec:equivariant} We will begin by giving a brief overview of equivariant $K$-theory, as this ring and its quantum generalization will be the primary settings in which we will be working.
Let $X$ be a complex algebraic variety with an action of an algebraic torus $T=(\mathbb{C}^{*})^n$. 
The Grothendieck group of $T$-equivariant coherent $\mathcal{O}_X$-modules, $K_T(X)$, is a module over $K^T(X)$, the ring of equivariant vector bundles over $X$. 
However, when $X$ is non-singular, the map $K^T(X)\to K_T(X)$ that sends a vector bundle to its sheaf of sections is an isomorphism. 
Since we will take $X$ to be a cominuscule flag variety, $X$ will be non-singular. 
Let $\Gamma=K^T(\text{point})$, then the pullback along the structure morphism $X\to \{\text{point}\}$ induces a $\Gamma$-algebra structure on $K^T(X)$.
This ring $\Gamma$ can be understood as the virtual representations of $T$, with $\mathbb{Z}$-basis consisting of the classes $[\mathbb{C}_\alpha]$.
Here $\alpha:T\to \mathbb{C}^{*}$ is a character and $\mathbb{C}_\alpha$ is the one-dimensional representation of $T$ defined by $t.z=\alpha (t)z$ for $t\in T$ and $z\in \mathbb{C}_\alpha$.
Furthermore, for our purposes $X$ will always be a projective variety, and thus we have a pushforward along the structure morphism of $X$, $\chi_X:K_T(X)\to \Gamma$. The map $\chi_X$ is a homomorphism of $\Gamma$-modules by the projection formula. 
We use the same notation to denote the sheaf Euler characteristic,  $\chi_X: K^T(X) \times K^T(X)\to \Gamma$ which is a symmetric nondegenerate bilinear form on $K^T(X)$. 
When we say that the ideal sheaf basis for $K^T(X)$ is dual to the basis of Schubert classes, this duality is with respect to the sheaf Euler Characteristic. 
For a more complete reference on equivariant $K$-theory see \cite{chriss.ginzburg:representation, fulton:intersection}.

\subsection{Schubert Varieties}\label{sec:Schubert1} As flag varieties, cominuscule varieties are of the form $G/P$ under the usual setting $T\subset B \subset P\subset G$. As such they have a stratification into Schubert cells.
Here $G$ is a complex semisimple Lie group, $P$ is a parabolic subgroup, $B$ is a Borel subgroup, and $T$ is a maximal torus.
We let $W=N_G(T)/T$ be the Weyl group of $G$, let $W_P=N_P(T)/T$ be the Weyl group of $P$, and let $W^P\subset W$ be the set of minimal representatives of the cosets in $W/W_P$.
When $G$ has Lie type A, $W=S_n$, and $W^P$ is isomorphic to the group of Grassmannian permutations.
Each element $w\in W$ defines a $B$-stable Schubert variety $X_w=\overline{Bw.P}$ and an opposite $B^{-}$-stable Schubert variety $X^w=\overline{B^{-}w.P}$ in $X=G/P$.
Here $B^{-}\subset G$ is the opposite Borel subgroup such that $B\cap B^{-}=T$. 
If we take $w\in W^P$ so that it is a minimal representative, then we have $dim(X_w)=codim(X^w,X)=l(w)$ where $l(w)$ denotes the length of $w$. 
Any $u\in W^P$ has a dual element, denoted $u^{\vee}=w_0uw_P$, where $w_0$ is the longest element in $W$ and $w_P$ is the longest element in $W_P$. 
The diagrammatic combinatorics of cominuscule varieties requires discussion of their root systems.
We let $\Phi$ be the root system of $(G,T)$, interpreted as a set of characters of $T$, let $\Phi^{+}$ be the set of positive roots determined by $B$, let $\Delta\subset\Phi^{+}$ be the simple roots, and let $\Delta_P\subset \Delta$ be the set of simple roots $\beta$ where the associated reflection $s_\beta$ is in $W_P$.

\subsection{Cominuscule Varieties}\label{sec:Cominuscule1} A simple root $\gamma\in \Delta$ is called \textit{cominuscule} if the coefficient of $\gamma$ is one when the highest root is written as a linear combination of simple roots. 
The flag variety $X=G/P$ is cominuscule if $\Delta\setminus\Delta_P$ consists of a single cominuscule root $\gamma$. Furthermore, $X$ is \textit{minuscule} if the root system $\Phi$ is simply laced, meaning the roots are all of the same length. 
We take roots within a simply laced root system to be considered long roots. 
The cominuscule flag varieties that are not minuscule are the Lagrangiann Grassmannians and the quadrics of odd dimension. 
We will see that short roots play a role in the combinatorics of both the ideal sheaves and the quantized ideal sheaves. Because minuscule flag varieties have no short roots, the combinatorics of their ideal sheaves will simplify significantly. 
In particular, many sums will reduce to a single term in the minuscule setting.

One of the great benefits of working in the cominuscule setting is the use of combinatorial diagrams to discuss Schubert calculus calculations. We will now outline how these diagrams come about.
Assume $X$ is cominuscule. From a result by Proctor we then have that the Bruhat order on $W^P$ is a distributive lattice that agrees with the left weak Bruhat order \cite{proctor:bruhat,stembridge:fully}. 
There is a partial order on the root lattice $\text{Span}_\mathbb{Z}(\Delta)$ defined by $\alpha'\leq \alpha$ if and only if $\alpha - \alpha'$ can be written as a sum of positive roots. 
Let $\mathcal{P}_X=\{\alpha\in \Phi|\alpha\geq \gamma\}$, with the induced partial order from the root lattice. 
Then, since $X$ is cominuscule, this is also the set of positive roots $\alpha$ for which the coefficient of the cominuscule root $\gamma$ is one. 
For any element $u\in W$ we let $I(u)=\{\alpha\in \Phi^{+} | u.\alpha<0\}$ denote the inversion set of $u$ and so $l(u)=|I(u)|$. If we restrict to only $u\in W^P$ then the map given by $u\mapsto I(u)$ is a bijection between the elements of $W^P$ and the lower order ideals of $\mathcal{P}_X$. 
This assignment is order preserving in the sense that $u\leq v$ if and only if $I(u)\subset I(v)$. 
Given a lower order ideal $\lambda\subset \mathcal{P}_X$ let $\lambda=\{\alpha_1,\alpha_2,\ldots,\alpha_{|\lambda|}\}$ be any ordering of its elements that is compatible with the partial order $\leq$, meaning $\alpha_i<\alpha_j$ implies $i<j$. 
Then the element of $W^P$ corresponding to $\lambda$ is the product of reflections $w_\lambda=s_{\alpha_1} s_{\alpha_2} \ldots s_{\alpha_{|\lambda|}}$. 
When $X$ is of Lie Type A, the set of order ideals $\lambda$ is in bijective correspondence with the set of Young diagrams used to discuss the combinatorics of Schubert calculus with regards to the Grassmannian, $Gr(k,n)$. 
Similarly when $X$ is of Lie Type C, the order ideals correspond to the shifted Young Diagrams that are used within the setting of the Lagrangian Grassmannian.
The bijection between order ideals of $\mathcal{P}_X$ and elements of $W^P$ allows us to generalize these combinatorial diagrams to any cominuscule $X$. 
For this reason the roots in $\mathcal{P}_X$ will sometimes be called $\textit{boxes}$, and the order ideal $I(u)$ will be called the \textit{shape} of $u$.
Since the set of $u\in W^P$ is in one-to-one correspondence with the set of $I(u)$ and the diagrams of these shapes will play a role in combinatorial formulae we will often abuse notation and write $u$ when referring to its shape.
Given a box, $\alpha$, we will sometimes write $u\setminus \alpha$ to mean the shape of $u$ with the box $\alpha$ removed. Similarly we will write $u\cup \alpha$ to mean the shape of $u$ with the box $\alpha$ added. In order to use these operations one must check that they are are well defined, meaning that the result is a shape that corresponds to an element in $W^P$.

Given two elements $u,w$ in $W^P$ such that $u\leq w$ we will write $w/u=wu^{-1}\in W$. 
The Bruhat order on $W^P$ agrees with the left weak Bruhat order, so we have that $l(w/u)=l(w)-l(u)$. 
For any $\alpha\in \mathcal{P}_X$, fix the order ideal $\lambda=\{\alpha'\in \mathcal{P}_X|\alpha'<\alpha\}$ consisting of the roots smaller than $\alpha$, and the define $\delta(\alpha)=w_\lambda.\alpha$. 
Then $s_{\delta(\alpha)}=w_{\lambda}s_\alpha w_\lambda^{-1}=w_{\lambda\cup\alpha}/w_\lambda$ has length one and is a simple reflection. 
We then have that $\delta:\mathcal{P}_X\to\Delta$ is a labeling of the boxes in $\mathcal{P}_X$ by simple roots. 
Furthermore, if $I(w)\setminus I(u)=\{\alpha_1,\alpha_2,\ldots,\alpha_r\}$ is any ordering compatible with $\leq$, then $w/u=s_{\delta(\alpha_r)}\cdots s_{\delta(\alpha_2)}s_{\delta(\alpha_1)}$ is a reduced expression for $w/u$, in that $l(w/u)=\sum_i l(s_{\delta(\alpha_i)})=r$. 
Thus we have that the \textit{skew shape}, $I(w)\setminus I(u)$, completely determines $w/u$. 
If all the simple roots $\delta(\alpha_r),\ldots,\delta(\alpha_1)$ in the reduced expression for $w/u$ are short roots, then we call $I(w)\setminus I(u)$ a \textit{short skew shape}. 
We will abuse notation and call $w/u$ a (short) skew shape as well. The diagram for $w/u$ can be understood as the skew diagram that keeps all the boxes from the diagram of $w$ that remain after removing the boxes that are in the diagram of $u$.

We call $w/u$ a \textit{rook strip} if, as element of $W$, $w/u$ is a product of commuting simple reflections. 
Equivalently, no pair of roots in $I(w)\setminus I(u)$ are comparable by the order $\leq$. 
We include $u/u$, the empty skew shape, as a rook strip. 
In terms of diagrams, two boxes in a diagram will be comparable if they are in the same column or in the same row. 
We say $w/u$ is a \textit{short rook strip} if it is a product of commuting reflections corresponding to short simple roots. 
In other words $I(w)\setminus I(u)$ is made of incomparable short roots. 
Notice that if $X$ is minuscule, then the root system $\Phi$ is simply laced, all roots are long by convention, and $w/u$ is a short rook strip if and only if $w=u$.

\subsection{Schubert Structure Constants}\label{sec:Schubert2} The Schubert structure constants provide insight into the role that the ideal sheaves play in the Schubert calculus of $K^T(X)$.
The equivariant $K$-theory ring $K^T(X)$ of the flag variety $X$ has a basis over $\Gamma$, indexed by $w\in W^P$, consisting of the (opposite) \textit{Schubert classes} $\mathcal{O}^w=[\mathcal{O}_{X^w}]$. 
The classes $\mathcal{O}_w=[\mathcal{O}_{X_w}]$ also form a basis. 
The \textit{Schubert structure constants} of $K^T(X)$ are the classes $N^{w,0}_{u,v}\in \Gamma$ defined for $u,v,w\in W^P$ by the identity 
$$\mathcal{O}^u\cdot\mathcal{O}^v=\sum_w N^{w,0}_{u,v}\mathcal{O}^w.$$

Define $\mathcal{O}^{\vee}_w\in K^T(X)$ to be the basis element dual to $\mathcal{O}^w$, given by $\chi_X(\mathcal{O}^u\cdot \mathcal{O}^{\vee}_w)=\delta_{u,w}$ for $u,w\in W^P$. 
It follows that $N^{w,0}_{u,v}=\chi_X(\mathcal{O}^u\cdot \mathcal{O}^v\cdot \mathcal{O}^{\vee}_w)$. The \textit{boundary} of the Schubert variety $X_w$ is the closed subvariety defined by $\partial X_w=X_w\setminus Bw.P.$ 
It was proven by Brion that 
$$\mathcal{O}^{\vee}_w=[I_{\partial X_w}]$$
where $I_{\partial X_w}\subset \mathcal{O}_{X_w}$ denotes the ideal sheaf of this boundary \cite{brion:positivity}. 
As notation we will take $\mathcal{I}_w=[I_{\partial X_w}]$ to be this element dual to $\mathcal{O}^w$ under the pairing $\chi_X$.
Similarly we will take $\mathcal{I}^w$ to be dual to $\mathcal{O}_w$ under $\chi_X$. 
In \Section{classical results} we will primarily focus on $\mathcal{I}^w$ so that we may more easily write the ideal sheaves in terms of the (opposite) Schubert classes, $\mathcal{O}^u$, that are indexed by codimension.  
We discuss how to find $\mathcal{I}_w$ from $\mathcal{I}^w$ after \Theorem{classical}. 

Working in the equvariant K-theory ring, $K^T(X)$ as opposed to working in the ordinary K-theory ring, $K(X)$ has some advantages. 
Let $X^T=\{w.P|w\in W^P\}$ denote the set of $T$-fixed points in $X$. 
Then the restriction $K^T(X)\to K^T(X^T)=\prod_{w\in W^P}\Gamma$ is an injective ring homomorphism, and calculations in $K^T(X)$ may be carried out in $K^T(X^T)$ instead \cite{kostant.kumar:t-equivariant, goresky.kottwitz.ea:equivariant}.
The images of Schubert classes under this map are given by the restriction formulae in \cite{andersen.jantzen.ea:representations, billey:kostant,
  graham:equivariant, willems:k-theorie} (see also
\cite{knutson:schubert}). 
Furthermore, the ring structure of $K^T(X)$ is also determined by the Chevalley formula of Lenart and Postnikov \cite{lenart.postnikov:affine}, which provides an explicit expression for the product of any Schubert class with a divisor in $K^T(X)$. 
This is not the case in the ordinary $K$-theory ring $K(X)$. 
Similarly, the ring structure of $QK_T(X)$ is completely determined by the Chevalley formula of \cite{buch.chaput.ea:qkchev}, which is not true for the ordinary quantum $K$-theory ring, $QK(X)$. 
We make use of various forms of the Chevalley formula in both $K^T(X)$ and $QK_T(X)$ throughout the subsequent sections. 
For many more useful formulae involving Schubert structure constants in various special cases one can look in \cite{buch.samuel:k-theory, knutson:puzzles, buch:mutations, pechenik.yong:equivariant}.  

\subsection{Quantum \texorpdfstring{$K$}{Lg}-theory}\label{sec:Quantum1}The equivariant quantum $K$-theory ring, $QK_T(X)$, generalizes the equivariant $K$-theory ring, $K^T(X)$, in the sense that calculations in $K^T(X)$ can first be done in $QK_T(X)$ where we then restrict the quantum parameters to $0$. 
In this way, the quantized ideal sheaves, 
$\mathcal{I}^\mu_q$, will restrict to the classical ideal sheaves, $\mathcal{I}^\mu$. 
This can be seen in \Theorem{Thm3}. 

We follow the definition for $QK_T(X)$ as outlined by Givental \cite{givental:wdvv} and Lee \cite{lee:quantum}. 
We call a homology class $d=\sum d_\beta[X_{s_\beta}]\in H_2(X;\mathbb{Z})$, indexed by $\beta\in \Delta\setminus\Delta_P$, an \textit{effective degree} if $d_\beta\geq0$ for each $\beta$.
When $d,e\in H_2(X;\mathbb{Z})$ we write $e\leq d$ if and only if $d-e$ is effective. 
For any effective degree $d$ we let $\overline M_{0,n}(X,d)$ denote the Kontsevich moduli space of $n$-pointed stable maps to $X$ of genus zero and degree $d$ \cite{fulton.pandharipande:notes}.
There are natural evaulation maps $ev_i:\overline M_{0,n}(X,d) \to X$ for $1\leq i\leq n$. 
Given an effective degree $d$, and classes $\kappa_1,\kappa_2,\ldots,\kappa_n\in K^T(X)$, we define the corresponding (equivariant $K$-theoretic) Gromov-Witten invariant of $X$ by 
$$I_d(\kappa_1,\kappa_2,\ldots,\kappa_n)=\chi_{\overline M_{0,n}(X,d)}(ev^{*}_1(\kappa_1)
\cdot ev^{*}_2(\kappa_2)\cdot\ldots\cdot ev^{*}_n(\kappa_n))\in \Gamma$$
This invariant is $\Gamma$-linear in each argument $\kappa_i$. 
It is worth noting that while Gromov-Witten invariants are necessary to discuss quantum $K$-theory, they will only briefly come up in the following proofs. 
\Lemma{GWi} shows that the necessary Gromov-Witten invariants will end up being $0$ for $d\geq1$. 
The remaining degree $0$ Gromov-Witten invariants are then equivalent to the sheaf Euler characteristic from classical $K$-theory.

The (small) $T$-equivariant quantum $K$-theory ring of $X$, denoted by $QK_T(X)$, is an algebra over the ring of formal power series $\Gamma[[q]]=\Gamma[[q_\beta : \beta\in \Delta\setminus \Delta_P]]$, which has one variable $q_\beta$ for each simple root $\beta\in \Delta\setminus\Delta_P$.
For $d=\sum_\beta d_\beta [X_{s_\beta}]$ we write $q^d=\prod_{\beta}q_\beta^{d_\beta}$. 
In the context of this paper we will take $X$ to be cominuscule, so $\Delta\setminus \Delta_P =\{\gamma\}$. 
Thus the equivariant quantum $K$-theory ring $QK_T(X)$ is an algebra over the power series ring $\Gamma[[q]]$ in a single variable $q=q_\gamma$. 
We drop the subscript and denote this variable by $q$. 
Similarly, effective degrees $d=\sum_\beta d_\beta [X_{s_\beta}]$ are only indexed by $d_\gamma$ and can thus be identified with the nonnegative integers. 
We have $QK_T(X)=K^T(X)\otimes_{\Gamma}\Gamma[[q]]$ as a module over $\Gamma[[q]]$. 
Thus $QK_T(X)$ is a free module over $\Gamma[[q]]$ with basis consisting of the Schubert classes $\mathcal{O}^w$ for $w\in W^P$.

The \textit{quantum K-metric} is the $\Gamma[[q]]$-bilinear pairing $QK_T (X) \times QK_T (X) \rightarrow \Gamma[[q]]$ determined by
$$\left(\!\left(\kappa_1,\kappa_2\right)\!\right)=\sum_{d\geq0} q^dI_d(\kappa_1,\kappa_2)$$
for $\kappa_1 , \kappa_2 \in K_T (X)$. 
Since this pairing is nondegerate, there is a unique $\Gamma[[q]]$-bilinear product $QK_T (X) \times QK_T (X) \rightarrow QK_T (X)$ defined by
$$\left(\!\left(\kappa_1\star\kappa_2,\kappa_3\right)\!\right)=\sum_{d\geq0} q^d I_d( \kappa_1,\kappa_2,\kappa_3)$$
for all 
$\kappa_1 , \kappa_2,\kappa_3 \in K_T (X)$.
The Frobenius property $\left(\!\left(\kappa_1\star\kappa_2,\kappa_3\right)\!\right)=\left(\!\left(\kappa_1,\kappa_2\star\kappa_3\right)\!\right)$ holds for all $\kappa_1 , \kappa_2,\kappa_3 \in QK_T (X)$, due to the symmetry of the Gromov-Witten invariants. 
The string identity $I_d(\kappa_1,\ldots,\kappa_n,1)=I_d(\kappa_1,\ldots,\kappa_n)$ then implies that $1\in K_T(X)$ is a multiplicative unit in $QK_T(X)$ \cite{buch.chaput.ea:euler}. 
A theorem of Givental states that this defines an associative product \cite{givental:wdvv}. 
It is with regards to the quantum $K$-metric that the quantized ideal sheaves, $\mathcal{I}_q^\mu$, will be the basis dual to the Schubert basis. 
It is worth mentioning that it is known how to calculate the quantum $K$-metric for any flag variety, $G/ P$. 
For example when $X=G/P$ is cominuscule $$\left(\!\left(\mathcal{O}^u,\mathcal{O}_v\right)\!\right)=\frac{q^{d(u,v)}}{1-q}$$ where $d(u,v)=dist_X(u,v)$ is the minimal degree $d$ for which $q^d$ occurs in the product $X^u\star X_v$ in the quantum cohomology ring, $QH(X)$. 
The pairing $\left(\!\left(\mathcal{O}^u,\mathcal{O}_v\right)\!\right)$ was first calculated in \cite{buch.mihalcea:quantum} by making use of curve neighborhoods, and \cite{buch.chaput.ea:euler} then identified the resulting first curve neighborhood as $\frac{q^{d(u,v)}}{1-q}$.

Multiplication in $QK_T(X)$ can also be defined in terms of structure constants by $$\mathcal{O}^u\star\mathcal{O}^v=\sum_{w,d\geq0}N^{w,d}_{u,v}q^d \mathcal{O}^w$$
where the sum is over all effective degrees $d$ and $w\in W^P.$ 
The structure constants $N^{w,d}_{u,v}$ can be calculated recursively by 
$$N^{w,d}_{u,v}=I_d(\mathcal{O}^u,\mathcal{O}^v,\mathcal{O}^{\vee}_w)-\sum_{\lambda,0<e\leq d}N^{\lambda,d-e}_{u,v}I_e(\mathcal{O}^\lambda,\mathcal{O}^{\vee}_w)$$
where this sum is over all $\lambda\in W^P$ and degrees $e$ for which $0<e\leq d$. 
Here note that $\mathcal{O}^{\vee}_w=\mathcal{I}_w$, the ideal sheaves in $K^T(X)$. 
However, we can instead write 
$$N^{w,d}_{u,v}=\left(\!\left(\mathcal{O}^u\star\mathcal{O}^v,\mathcal{I}_w^q\right)\!\right)$$
where $\mathcal{I}^q_w$ is the element dual to $\mathcal{O}^w$ with regard to the quantum K-metric in $QK_T(X)$. 
In this sense the quantized ideal sheaves allow us to write the Schubert structure constants of $QK_T(X)$ without a recursive calculation. 

\subsection{Curve Neighborhoods}\label{sec:curve}
The Chevalley formula in both $K^T(X)$ and $QK_T(X)$ can be formulated combinatorially in terms of adding or removing boxes from diagrams that represent Weyl group elements. 
The same can be said for formulae for writing ideals sheaf classes in terms of Schubert classes. 
In order to describe the quantized ideal sheaf classes, $\mathcal{I}_q^\mu$ in $QK_T(X)$ in terms of adding or removing boxes from diagrams, we need to discuss curve neighborhoods. 
Specifically, if we are given $\mu\in W^P$ we only require the first curve neighborhood of $\mu$, which we will call $\mu(-1)$. 
For a more robust discussion on curve neighborhoods see \cite{buch.chaput.ea:qkchev, buch.mihalcea:curve}. 

Let $\Omega\subset X$ be any closed subvariety and let $d\in H_2(X;\mathbb{Z})$ be an effective degree, then we let $\Gamma_d(\Omega)$ denote the closure of the union of all rational curves of degree $d$ in $X$ that pass through $\Omega$. 
Equivalently, $\Gamma_d(\Omega)=ev_2(ev_1^{-1}(\Omega))$  where $ev_1,ev_2:\overline M_{0,2}(X,d)\to X$ are the evaluation maps. 
Furthermore $\Gamma_d(\Omega)$ is irreducible whenever $\Omega$ is irreducible \cite{buch.chaput.ea:finiteness}.
If $\Omega$ is then a $B$-stable Schubert variety in $X$, we have that  $\Gamma_d(\Omega)$ is as well. 
Given $w\in W^P$ we then have well-defined $w(d)$ and $w(-d)$ in $W^P$ given by the identities $\Gamma_d(X_w)=X_{w(d)}$ and $\Gamma_d(X^w)=X^{w(-d)}$. 
When $X$ is a cominuscule variety, the Schubert class associated to $w(-1)\in W^P$ will be $\mathcal{O}^{w(-1)}$, the class of the structure sheaf of the Schubert variety $X^{w(-1)}=\Gamma_1(X^w)$. 
For $d\geq 0$ we define $z_d\in W^P$ to be the unique element such that $X_{z_d}=\Gamma_d(1.P)$. 
This element satisfies that $z_d w_P$ is inverse to itself, where $w_P$ denotes the longest element in $W_P$.

The following definition only comes up in the proof of \Lemma{GWi}. 
Let $\Gamma_d(X_u, X^v)$ denote the subvariety of $X$ given by the union of all stable curves of degree $d$ that pass through $X_u$ and $X^v$.
Equivalently we have
$\Gamma_d(X_u,X^v) = \ev_3(\ev_1^{-1}(X_u) \cap \ev_2^{-1}(X^v))$,
where $\ev_1, \ev_2, \ev_3 : \Mb_{0,3}(X,d) \to X$ are the evaluation maps.

\def\vmm#1{\vspace{#1mm}}
\begin{table}
\caption{Partially ordered sets of cominuscule varieties
  with $I(z_1)$ highlighted.}
\label{tab:tablez1}
\begin{tabular}{|c|c|}
\hline
&\vmm{-2}\\
Grassmannian $\Gr(3,7)$ of type A & Max.\ orthog.\ Grassmannian $\OG(6,12)$
\\
&\vmm{-3}\\
\pic{1}{dyn_gr37} &\\
&\\
&\vmm{-3}\\
\ydiagram{4,4,4}
    *[*(gray)]{4,1,1}
&\vmm{-27}\\
&\pic{1}{dyn_og6}
\\ &
    \ydiagram{0+0,0+0,2+3,3+2,4+1}
    *[*(gray)]{5,1+4}
\\ &
\vmm{-2}\\
\hline
&\vmm{-2}\\
Lagrangian Grassmannian $\LG(6,12)$ & Cayley Plane $E_6/P_6$
\\
&\vmm{-3}\\
\pic{1}{dyn_sg6} & \\
&\vmm{-2}\\
        \ydiagram[*(white)\bullet]
    {0,1+1,2+1,3+1,4+1,5+1}
    *[*(white)]{0+0,2+4,3+3,4+2,5+1}
    *[*(gray)\bullet]{1,0,0,0,0,0}
    *[*(gray)]{1+5,0,0,0,0,0}
&\vmm{-36}\\
& \pic{1}{dyn_e6} \\
&\vmm{-2}\\
&
    \ydiagram{0+0,0+0,0+0,3+5}
    *[*(gray)]{5,2+3,3+3}
\\
& \vmm{-2}\\
\hline
& \vmm{-2}\\
Even quadric $Q^{10} \subset \bP^{11}$ & Freudenthal variety $E_7/P_7$
\\
&\vmm{-3}\\
\pic{1}{dyn_q10} &
\\
&\vmm{-3}\\
    \ydiagram{0+0,7+1}
    *[*(gray)]{5,3+4}
& \\
&\\
\hhline{-~}
&\vmm{-2}\\
Odd quadric $Q^{11} \subset \bP^{12}$ &
\\
&\vmm{-3}\\
\pic{1}{dyn_q11} & \\
&\vmm{-1}\\
        \ydiagram[*(white)\bullet]
    {10+1}
    *[*(white)]{0}
    *[*(gray)\bullet]{1}
    *[*(gray)]{1+9}
& \vmm{-52}\\
& \pic{1}{dyn_e7} \\
&\vmm{-3}\\
&
    \ydiagram{0+0,0+0,0+0,0+0,4+5,7+2,8+1,8+1,8+1}
    *[*(gray)]{6,3+3,4+3,4+5}

\vmm{-2}\\
& \\
\hline
\end{tabular}
\end{table}

\Table{tablez1} displays one cominuscule variety $X$ from each family as well as the associated Dynkin diagram and the partially ordered set $\mathcal{P}_X$. 
The marked node in the Dynkin diagram corresponds to the cominuscule simple root $\gamma$.
The roots of $\mathcal{P}_X$ are represented as boxes, and the partial order is given by $\alpha' \leq \alpha$ if and only if $\alpha'$ is located north-west of $\alpha$.
In the cases where $X$ is cominuscule but not minuscule, the boxes that correspond to long roots are marked with a dot. 
In the diagrams with no dots, all the roots are long. 
In addition, the shape $I(z_1)$ is marked in gray. 
We have that \cite[Lemma 3.1]{buch.chaput.ea:qkchev} implies, for any element $u \in W^P$, the shape of $u(-1)$ is obtained from the shape of $u$ by first removing any boxes contained in $I(z_1)$, and then moving the remaining boxes to the upper-left corner of $\mathcal{P}_X$.

\section{Ideal Sheaf Basis}\label{sec:classical results}
We begin by giving a lemma necessary to manipulate the equivariant weights that appear in both the classical and quantized ideal sheaves. 
We adapt the notation used in the Chevalley formula for $QK_T(X)$ from  \cite{buch.chaput.ea:qkchev}. Let $J_u=[\mathbb{C}_{u.w_{\gamma}-w_{\gamma}}]$ and let $\sqrt{J_v J_w}=[\mathbb{C}_{v.w_{\gamma}-w_{\gamma}-\delta(w/v)}]$ with $w/v$ a short rook strip and $$\delta(w/v)=\sum_{\alpha\in I(w)\setminus I(v)}\delta(\alpha).$$
If we extend this definition so that 
$$\delta(w/v)=\sum_{\alpha\in I(w)\setminus I(v)}\delta(\alpha)$$ 
where $w/v$ is any skew shape, then we can write $\sqrt{J_v J_w}=[\mathbb{C}_{v.w_{\gamma}-w_{\gamma}-\delta(w/v)}]$ for any skew shape $w/v$. 
We then have the following lemma.

\begin{lemma}\label{lemma:weight} For $u,v,w\in W^P$ where $u \leq v \leq w$  we have that 
$$\frac{\sqrt{J_u J_v}\sqrt{J_v J_w}}{J_v}=\sqrt{J_u J_w}.$$
\end{lemma}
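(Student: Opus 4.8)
The plan is to pass to the additive picture of $\Gamma = K^T(\text{point})$: every class appearing in the identity is of the form $[\mathbb{C}_\chi]$ for a character $\chi$ of $T$, and under this identification $[\mathbb{C}_\chi]\cdot[\mathbb{C}_{\chi'}] = [\mathbb{C}_{\chi+\chi'}]$ while $[\mathbb{C}_\chi]^{-1} = [\mathbb{C}_{-\chi}]$. First I would observe that, by the extended definition given just before the lemma, for $u \leq v \leq w$ all three factors
\[
\sqrt{J_u J_v} = [\mathbb{C}_{u.w_{\gamma} - w_{\gamma} - \delta(v/u)}], \quad
\sqrt{J_v J_w} = [\mathbb{C}_{v.w_{\gamma} - w_{\gamma} - \delta(w/v)}], \quad
\sqrt{J_u J_w} = [\mathbb{C}_{u.w_{\gamma} - w_{\gamma} - \delta(w/u)}]
\]
are well-defined elements of $\Gamma$, the skew shapes $v/u$, $w/v$, $w/u$ making sense precisely because $u \leq v \leq w$.

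Next I would expand the left-hand side. Adding the characters of $\sqrt{J_u J_v}$ and $\sqrt{J_v J_w}$ and subtracting the character of $J_v = [\mathbb{C}_{v.w_{\gamma} - w_{\gamma}}]$, the terms $\pm(v.w_{\gamma} - w_{\gamma})$ cancel, leaving
\[
\frac{\sqrt{J_u J_v}\,\sqrt{J_v J_w}}{J_v} = [\mathbb{C}_{u.w_{\gamma} - w_{\gamma} - \delta(v/u) - \delta(w/v)}].
\]
Comparing this with $\sqrt{J_u J_w} = [\mathbb{C}_{u.w_{\gamma} - w_{\gamma} - \delta(w/u)}]$, the lemma becomes equivalent to the additivity statement $\delta(v/u) + \delta(w/v) = \delta(w/u)$ in the character lattice.

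Finally I would prove this additivity. By the order-preserving bijection $u \mapsto I(u)$ between $W^P$ and the lower order ideals of $\mathcal{P}_X$ recalled in \Section{Cominuscule1}, the hypothesis $u \leq v \leq w$ gives $I(u) \subseteq I(v) \subseteq I(w)$, so that $I(w) \setminus I(u)$ is the disjoint union of $I(v) \setminus I(u)$ and $I(w) \setminus I(v)$. Since $\delta(w/u) = \sum_{\alpha \in I(w)\setminus I(u)} \delta(\alpha)$ by definition, and likewise for the two smaller skew shapes, the sum over $I(w)\setminus I(u)$ splits as the sum of the two partial sums, yielding $\delta(w/u) = \delta(v/u) + \delta(w/v)$, hence the lemma.

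I do not expect a genuine obstacle here: the only points needing care are the bookkeeping in the first step (that the three square-root classes are legitimately defined through the extended $\delta$, and that $\delta$ is being read as a function into the character group of $T$) together with the elementary disjoint-union decomposition of the skew shape. There is no geometry or representation theory involved beyond the additivity of characters and the lattice structure of $W^P$ supplied by \Section{Cominuscule1}.
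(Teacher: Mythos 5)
Your proposal is correct and follows essentially the same route as the paper's proof: expand each class as a character class $[\mathbb{C}_\chi]$, cancel the $\pm(v.w_{\gamma}-w_{\gamma})$ contribution against $J_v$, and reduce to the additivity $\delta(v/u)+\delta(w/v)=\delta(w/u)$, which follows from the disjoint-union decomposition $I(w)\setminus I(u) = (I(v)\setminus I(u)) \cup (I(w)\setminus I(v))$. No gaps; this matches the paper's argument.
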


\begin{proof}
    $$\sqrt{J_u J_v}\sqrt{J_v J_w}=[\mathbb{C}_{u.w_{\gamma}-w_{\gamma}+v.w_{\gamma}-w_{\gamma}-\delta(w/v)-\delta(v/u)}]$$
Note that 
$$-\delta(w/v)-\delta(v/u)=-\left(\sum_{\alpha\in I(w)\setminus I(v)} \delta(\alpha)+ \sum_{\beta\in I(v)\setminus I(u)}\delta(\beta)\right)$$
$$=-\sum_{\alpha\in I(w)\setminus I(u)} \delta(\alpha)=-\delta(w/u).$$
Thus 
$$\sqrt{J_u J_v}\sqrt{J_v J_w}=[\mathbb{C}_{u.w_{\gamma}-w_{\gamma}+(v.w_{\gamma}-w_{\gamma})-\delta(w/u)}]$$ 
and so 
$$\frac{\sqrt{J_u J_v}\sqrt{J_v J_w}}{J_v}=[\mathbb{C}_{u.w_{\gamma}-w_{\gamma}-\delta(w/u)}]=\sqrt{J_u J_w}.$$
\end{proof}

Next we give a new, alternative formula for writing an ideal sheaf in terms of multiplication of a sum of Schubert classes by a specific line bundle in $K^T(X)$.
This line bundle is denoted $J=1-\mathcal{O}^{s_\gamma}$ in \cite{buch.chaput.ea:qkchev}. 
It is used to express the Chevalley formulae of both $K^T(X)$ and $QK_T(X)$, because it allows for Gromov Witten invariants to vanish in a convenient way. 
This can be seen in \Lemma{GWi}, which is essentially the same argument used to prove the quantum Chevalley formula in \cite{buch.chaput.ea:qkchev}. 
Thus, the formula for the classical ideal sheaves in the next theorem will be useful in proving a similar formula for the quantized ideal sheaves in $QK_T(X)$. 
We will need the particular version of the Chevalley formula for $K^T(X)$, that 
\begin{equation}\label{chevI}
\mathcal{O}^\mu\cdot (1-\mathcal{O}^{s_\gamma})=\sum_{\nu/\mu \text{ short rook strip}} (-1)^{l(\nu/\mu)}\sqrt{J_\mu J_\nu}\mathcal{I}^\nu
\end{equation}
\cite[Thm. 3.6]{buch.chaput.ea:qkchev}. 
This will be used in the proof of the next theorem. 
Note that in the case that $X$ is a minuscule variety, for a given $\nu\in W^P$ the only short rook strip $\nu/\mu$ is when $\mu=\nu$, so $l(\nu/\mu)=0$ and the formulae will simplify significantly.

\begin{thm}\label{thm:classical}
Let $\mu\in W^P$. Then $\mathcal{I^\mu}=\alpha^\mu \cdot(1-\mathcal{O}^{s_\gamma})$ with 
$\alpha^\mu=\sum_{\epsilon} c^\epsilon\mathcal{O}^\epsilon$
and 
$$c^\epsilon=\left\{
    \begin{array}{lr}
        \frac{\sqrt{J_\mu J_\epsilon}}{J_\mu J_\epsilon}, & \text{if } \epsilon/\mu \text{ is a short skew shape or }\mu=\epsilon  \\
        0, & \text{else.}
    \end{array}
\right.$$
\\
\end{thm}
In the minuscule cases since there are no short roots, and the only valid $\epsilon \in W^P$ is $\epsilon=\mu$, so 
$\alpha^\mu=\frac{1}{J_\mu}\mathcal{O}^{\mu}$. 
Also the map that restricts $c^\epsilon \mapsto 1$ will give the ideal sheaves for the non-equivariant $K$-theory ring, $K(X)$.

\begin{proof} We know that $(\mathcal{O}_\lambda,\mathcal{I}^\mu)=\delta_{\lambda\mu}$, and we need to show  
that 
$$(\mathcal{O}_\lambda,\alpha^\mu\cdot(1-\mathcal{O}^{s_\gamma}))=\delta_{\lambda\mu}$$ 
by the non-degeneracy of the pairing. By \eqref{chevI} we have: 

$$(\mathcal{O}_\lambda,\alpha^\mu\cdot (1-\mathcal{O}^{s_\gamma}))=(\mathcal{O}_\lambda,\sum_\epsilon c^\epsilon \mathcal{O}^\epsilon\cdot(1-\mathcal{O}^{s_\gamma}))=(\mathcal{O}_\lambda, \sum_\epsilon c^\epsilon \sum_\nu(-1)^{l(\nu/\epsilon)}\sqrt{J_\epsilon J_\nu}\mathcal{I}^\nu).$$
Here the second sum is over all $\nu\in W^P$ such that $\nu/\epsilon$ is a short rook strip.  
Using the linearity of the pairing:
$$\sum_\epsilon c^\epsilon(\mathcal{O}_\lambda,\sum_\nu(-1)^{l(\nu/\epsilon)}\sqrt{J_\epsilon J_\nu}\mathcal{I}^\nu)=\sum_\epsilon c^\epsilon\sum_\nu(-1)^{l(\nu/\epsilon)}\sqrt{J_\epsilon J_\nu}(\mathcal{O}_\lambda,\mathcal{I}^\nu).$$
Since $(\mathcal{O}_\lambda,\mathcal{I}^\nu)=\delta_{\lambda\nu}$,
we are left with 
\begin{equation}\label{E:sum-epsilon}\sum_\epsilon c^\epsilon\sum_\nu(-1)^{l(\nu/\epsilon)}\sqrt{J_\epsilon J_\nu}\delta_{\lambda\nu}=\sum_\epsilon c^\epsilon (-1)^{l(\lambda/\epsilon)}\sqrt{J_\epsilon J_\lambda} \end{equation}
where $\epsilon$ is such that $\epsilon/\mu$ is a short skew shape and $\lambda/\epsilon$ is a short rook strip. In particular,
$\mu \subset \epsilon \subset \lambda$. 

If $\mu = \lambda$, then $\epsilon = \mu$, $c^\mu =1/J_\mu$, and
\[ \sum_\epsilon c^\epsilon (-1)^{l(\lambda/\epsilon)}\sqrt{J_\epsilon J_\lambda}= c^\mu \cdot J_\mu \cdot (-1)^0 = 1 \/. \]

Now let $\mu \subsetneq \lambda$. Note that this will only occur in the case that $X$ is cominuscule and not minuscule. Let $A$ be the set of the partitions $\epsilon$ as above. 
Call a box, $\alpha$, \textit{removable} if $\alpha\in(\lambda/\epsilon)$ for some $\epsilon\in A$.
Fix a removable box and call it $\beta$. 
Define $\phi: A\rightarrow A$ such that 
$$\phi(\epsilon)= \left\{\begin{array}{lr}
        \epsilon\setminus \beta, & \text{if } \beta\in \epsilon\\
        \epsilon\cup \beta , &  \text{if } \beta\notin \epsilon
    \end{array}
   \right .$$
Once we have shown that $\phi$ is well-defined then it is an involution of $A$. 
$\phi$ is well-defined due to the condition that $\lambda/\epsilon$ is a short rook strip; there is a finite number of removable boxes and they are all mutually incomparable. 
$\beta$ is one such box, and thus both $\epsilon$ and $\phi(\epsilon)$ are in $A$. 
Furthermore, $l(\phi(\epsilon))=l(\epsilon)\pm 1$ and thus $l(\lambda/\phi(\epsilon))=l(\lambda/\epsilon)\mp 1$. 
Let $B\subset A$ be the set of $\epsilon$ that include $\beta$. 
Then we can use \Lemma{weight} above and the commutativity of the equivariant K-theory of a point to write 
$$\sum_{\epsilon\in A} c^\epsilon (-1)^{l(\lambda/\epsilon)}\sqrt{J_\epsilon J_\lambda}=\sum_{\epsilon\in A}  \frac{\sqrt{J_\mu J_\epsilon}}{J_\mu J_\epsilon}\sqrt{J_\epsilon J_\lambda}(-1)^{l(\lambda/\epsilon)}=\frac{\sqrt{J_\mu J_\lambda}}{J_\mu}\sum_{\epsilon\in A}(-1)^l(\lambda/\epsilon)$$
and 
$$\sum_{\epsilon\in A}(-1)^{l(\lambda/\epsilon)}=\sum_{\epsilon \in B}(-1)^{l(\lambda/\epsilon)}+(-1)^{l(\lambda/\phi(\epsilon))}=0.$$

Thus we have shown that 
$$(\mathcal{O}_\lambda,\alpha^\mu\cdot(1-\mathcal{O}^{s_\gamma}))=\sum_\epsilon c^\epsilon (-1)^{l(\lambda/\epsilon)}\sqrt{J_\epsilon J_\lambda}=\delta_{\lambda\mu}.$$

\end{proof}

In the case where $X$ is the Type A Grassmannian, \Theorem{classical} is consistent with \cite[Ex. 3.4.3 (2)]{brion:lectures}. Given $L$ is the ample generator of the Picard group of $X$, then the element dual to the Schubert class $\mathcal{O}^{\lambda}$, is the product $\mathcal{O}^{\lambda}\cdot[L^{-1}]$.

The basis for $K^T(X)$ of ideal sheaves $\{\mathcal{I}^\mu \ | \ \mu \in W^P\}$ will be dual to the Schubert basis $\{\mathcal{O}_\lambda \ | \ \lambda \in W^P\}$ with respect to the sheaf Euler characteristic pairing. 
However, the opposite Schubert basis, $\{\mathcal{O}^\lambda \ | \ \lambda \in W^P\}$, will also have a dual basis with respect to this pairing. 
This second dual basis, call it $\{\mathcal{I}_\mu \ | \ \mu \in W^P\}$, will also correspond to ideal sheaves. One can use the left Weyl group action to act by $w_0$, the longest element of $W$, to obtain a formula for the corresponding $\mathcal{I}_\mu$. 
This action is well defined at the level of Schubert cells, and thus respects the closures and boundaries that come up in the short exact sequence used to define the ideal sheaf. 
Thus 
$$w_0.\mathcal{I}^\mu=\mathcal{I}_{\mu^{\vee}}$$
if one is working non-equivariantly. 
However, this action is a ring homomoprhism, not an algebra homomorphism, and so in the equivariant setting one must be careful that there is a twisting of the weights \cite[Thm. 5.3, 5.5]{mihalcea.naruse.su:leftdemazure}. See also \cite{knutson:action} for more details. 
The same technique will work in the quantum $K$-theory setting as well. 

The formula for $\mathcal{I}^\mu$ in \Theorem{classical} is used in proving a very similar formula for the quantized ideal sheaf $\mathcal{I}^{\mu}_q\in QK_T(X)$.
However, \cite[Lemma 3.4]{buch.chaput.ea:qkchev} also gives a formula for $\mathcal{I}^\mu$. 
For a given $\mu\in W^p$, 
\begin{equation}\label{chevII}
    \mathcal{I}^\mu=\sum_{\nu/\mu}(-1)^{l(\nu/\mu)}\mathcal{O}^\nu
\end{equation} 
where $\nu/\mu$ is a rook strip.
Thus together with \Theorem{classical}, we have the following corollary.

\begin{cor}
    Let $\mu\in W^P$, then
    $$\alpha^{\mu}\cdot(1-\mathcal{O}^{s_\gamma})=\sum_{\nu/\mu}(-1)^{l(\nu/\mu)}\mathcal{O}^\nu$$
    where $\nu/\mu$ is a rook strip.
\end{cor}
This corollary tells us that $\alpha^\mu$ from \Theorem{classical} is the result of dividing the summation on the right hand side by the invertible element $(1-\mathcal{O}^{s_\gamma})$. 
The invertibility follows because this element is also equal to the product of line bundles 
$[\mathcal{L}^{\vee}_{\varpi_k} \otimes \mathbb{C}_{-\varpi_k}]$, where 
$\mathcal{L}_{\varpi_k}= G \times^P \mathbb{C}_{-\varpi_k}$ is the homogeneous line bundle associated to with the fundamental weight $\varpi_k$.
See also \eqref{E:Lvarpi} below.

\begin{example}
In this example we let the cominuscule space be $X=Gr(3,6)$.
We find $\alpha^\mu$ for a given $\mu\in W^P$ and demonstrate how \Theorem{classical} is consistent with \eqref{chevII}. 
Let $\mu\in W^P$ correspond to the partition $(3,1)$. 
Recall that $\mathcal{I}^\mu=\sum_{\nu/\mu}(-1)^{l(\nu/\mu)}\mathcal{O}^\nu$ where $\nu/\mu$ is a rook strip. 
We will find the various $\nu$ for this sum by looking at the diagram of $\mu=(3,1)$.
\begin{center}
\ytableausetup{smalltableaux}
\ydiagram{3,1}
\end{center}
The diagrams for $\nu$ correspond to $(3,1)$, $(3,1,1)$, $(3,2)$, and $(3,2,1)$,
\begin{center}
\hspace{2mm}\ydiagram{3,1} \hspace{2mm}
\ydiagram{3,1,1} \hspace{2mm} 
\ydiagram{3,2} \hspace{2mm} 
\ydiagram{3,2,1}
\end{center}
and thus $$\mathcal{I}^{(3,1)}=\mathcal{O}^{(3,1)}-\mathcal{O}^{(3,1,1)}-\mathcal{O}^{(3,2)}+\mathcal{O}^{(3,2,1)}$$
Since $X=Gr(3,6)$ is minuscule, it has no short roots, and so $\alpha^{(3,1)}=c^{(3,1)}\mathcal{O}^{(3,1)}$
If we calculate $\mathcal{O}^{(3,1)}\cdot(1-\mathcal{O}^{(1)})$ nonequivariantly, we find that $$\mathcal{O}^{(3,1)}\cdot\mathcal{O}^{(1)}=\mathcal{O}^{(3,1,1)}+\mathcal{O}^{(3,2)}-\mathcal{O}^{(3,2,1)}$$
and thus 
$$\mathcal{O}^{(3,1)}\cdot(1-\mathcal{O}^{(1)})=\mathcal{O}^{(3,1)}-\mathcal{O}^{(3,1,1)}-\mathcal{O}^{(3,2)}+\mathcal{O}^{(3,2,1)}=\mathcal{I}^{(3,1)}.$$
Thus \eqref{chevII} agrees with \Theorem{classical}. 
Note that we write this last calculation nonequivariantly because the equivariant calculation has dozens of terms and it becomes difficult to follow all the cancellations. 
\end{example}

\begin{example}
We go through a similar process as in the last example, but now in a space that is not minuscule.
Let the cominuscule space be $X=LG(4,8)$ and let $\mu\in W^P$ correspond to the strict partition $(3,2)$. 
Below we can see the shape of $\mu=(3,2)$.
\begin{center}
\ytableausetup{smalltableaux}
\ydiagram{3,1+2}
\end{center}
In order to find $\alpha^{\mu}$ we need all $\epsilon\in W^P$ such that $\epsilon/\mu$ is a short skew shape, or $\epsilon=\mu$. 
The possible $\epsilon$ where $\epsilon/\mu$ is a short skew shape, or $\epsilon = \mu$ have the following diagrams
\begin{center}
\ytableausetup{smalltableaux}
\ytableausetup{nobaseline}
\ydiagram{3,1+2} \hspace{5mm}
\ydiagram{4,1+2} \hspace{5mm}
\ydiagram{4,1+3}
\end{center}
Thus the possible $\epsilon$ correspond to the strict partitions $(3,2)$, $(4,2)$, and $(4,3)$.
Note that $(3,2,1)/(3,2)$ does not give a short skew shape, because the root corresponding to the remaining box is long. From \Theorem{classical}, 
$$\alpha^{(3,2)}=c^{(3,2)}\mathcal{O}^{(3,2)}+c^{(4,2)}\mathcal{O}^{(4,2)}+c^{(4,3)}\mathcal{O}^{(4,3)}$$
and
$$\mathcal{I}^{(3,2)}=(c^{(3,2)}\mathcal{O}^{(3,2)}+c^{(4,2)}\mathcal{O}^{(4,2)}+c^{(4,3)}\mathcal{O}^{(4,3)})\cdot(1-\mathcal{O}^{(1)}).$$
We restrict to the nonequivariant setting by setting all the $c^\epsilon$ to $1$. 
Then 
$$\mathcal{O}^{(3,2)}\cdot(1-\mathcal{O}^{(1)})=\mathcal{O}^{(3,2)}-2\mathcal{O}^{(4,2)}+\mathcal{O}^{(4,3)}-\mathcal{O}^{(3,2,1)}+2\mathcal{O}^{(4,2,1)}-\mathcal{O}^{(4,3,1)}$$
$$\mathcal{O}^{(4,2)}\cdot(1-\mathcal{O}^{(1)})=\mathcal{O}^{(4,2)}-2\mathcal{O}^{(4,3)}-\mathcal{O}^{(4,2,1)}+2\mathcal{O}^{(4,3,1)}$$
$$\mathcal{O}^{(4,3)}\cdot(1-\mathcal{O}^{(1)})=\mathcal{O}^{(4,3)}-\mathcal{O}^{(4,3,1)}.$$
Summing these together, we get $\mathcal{O}^{(3,2)}-\mathcal{O}^{(4,2)}-\mathcal{O}^{(3,2,1)}+\mathcal{O}^{(4,2,1)}=\mathcal{I}^{(3,2)}$. 

Alternatively, \eqref{chevII} gives that for a given $\mu\in W^p$, $\mathcal{I}^\mu=\sum_{\nu/\mu}(-1)^{l(\nu/\mu)}\mathcal{O}^\nu$ where $\nu/\mu$ is a rook strip. 
The possible $\nu$ have the following diagrams
\begin{center}
\ytableausetup{smalltableaux}
\ytableausetup{nobaseline}
\ydiagram{3,1+2} \hspace{5mm}
\ydiagram{4,1+2} \hspace{5mm}
\ydiagram{3,1+2,2+1} \hspace{5mm}
\ydiagram{4,1+2,2+1}
\end{center}
Thus the possible $\nu$ correspond to the strict partitions $(3,2)$, $(4,2)$, $(3,2,1)$, and $(4,2,1)$. 
Thus $\mathcal{I}^{(3,2)}=\mathcal{O}^{(3,2)}-\mathcal{O}^{(4,2)}-\mathcal{O}^{(3,2,1)}+\mathcal{O}^{(4,2,1)}$, and we see that the nonequivariant version of \Theorem{classical} agrees with \eqref{chevII}.
\end{example}

\begin{example}
Again let $X=LG(4,8)$ and $\mu=(3,2)$. 
We apply \eqref{chevI} three times, in order to calculate
$$\mathcal{I}^{(3,2)}=(c^{(3,2)}\mathcal{O}^{(3,2)}+c^{(4,2)}\mathcal{O}^{(4,2)}+c^{(4,3)}\mathcal{O}^{(4,3)})\cdot(1-\mathcal{O}^{(1)}).$$
For $\epsilon=(3,2)$, the only short rook strips $\nu/\epsilon$ are when $\nu_1=\epsilon=(3,2)$ and $\nu_2=(4,2)$. 
\begin{center}
\ydiagram{3,1+2} \hspace{2mm}-\hspace{2mm}
\ydiagram{4,1+2}
\end{center}
So
$$\mathcal{O}^{(3,2)}\cdot(1-\mathcal{O}^{(1)})= c^{(3,2)}\sqrt{J_{(3,2)}J_{(3,2)}}\mathcal{I}^{(3,2)}-c^{(3,2)}\sqrt{J_{(3,2)}J_{(4,2)}}\mathcal{I}^{(4,2)}$$
Similarly for $\epsilon'=(4,2)$, the only short rook strips are $\nu_3=\epsilon'=(4,2)$ and $\nu_4=(4,3)$. 
\begin{center}
    \hspace{2mm}\ydiagram{4,1+2} \hspace{2mm}-\hspace{2mm}
\ydiagram{4,1+3}
\end{center}
So 
$$\mathcal{O}^{(4,2)}\cdot(1-\mathcal{O}^{(1)})=c^{(4,2)}\sqrt{J_{(4,2)}J_{(4,2)}}\mathcal{I}^{(4,2)}-c^{(4,2)}\sqrt{J_{(4,2)}  J_{(4,3)}}\mathcal{I}^{(4,3)}$$
Lastly, when $\epsilon''=(4,3)$ only $\nu_5=\epsilon''=(4,3)$ will give a short rook strip.
\begin{center}
\hspace{2mm}\ydiagram{4,1+3}
\end{center}
So 
$$\mathcal{O}^{(4,3)}\cdot(1-\mathcal{O}^{(1)})=c^{(4,3)}\sqrt{J_{(4,3)}J_{(4,3)}}\mathcal{I}^{(4,3)}$$

Thus 
\[ \begin{split}
(c^{(3,2)}&\mathcal{O}^{(3,2)}+c^{(4,2)}\mathcal{O}^{(4,2)}+c^{(4,3)}\mathcal{O}^{(4,3)})\cdot(1-\mathcal{O}^{(1)}) \\ 
 & = c^{(3,2)}\sqrt{J_{(3,2)}J_{(3,2)}}\mathcal{I}^{(3,2)}-c^{(3,2)}\sqrt{J_{(3,2)}J_{(4,2)}}\mathcal{I}^{(4,2)} +c^{(4,2)}\sqrt{J_{(4,2)}J_{(4,2)}}\mathcal{I}^{(4,2)} \\
 & \hspace{3mm}-c^{(4,2)}\sqrt{J_{(4,2)}  J_{(4,3)}}\mathcal{I}^{(4,3)} +c^{(4,3)}\sqrt{J_{(4,3)}J_{(4,3)}}\mathcal{I}^{(4,3)} \\
 & = \mathcal{I}^{(3,2)}-\frac{\sqrt{J_{(3,2)}J_{(4,2)}}}{J_{(3,2)}}\mathcal{I}^{(4,2)} +\frac{\sqrt{J_{(3,2)}J_{(4,2)}}}{J_{(3,2)}}\mathcal{I}^{(4,2)} \\
& \hspace{3mm} -\frac{\sqrt{J_{(3,2)} J_{(4,3)}}}{J_{(3,2)}}\mathcal{I}^{(4,3)} +\frac{\sqrt{J_{(3,2)}J_{(4,3)}}}{J_{(3,2)}}\mathcal{I}^{(4,3)} \\
& = \mathcal{I}^{(3,2)}+\frac{\sqrt{J_{(3,2)}J_{(4,2)}}}{J_{(3,2)}}(-\mathcal{I}^{(4,2)}+\mathcal{I}^{(4,2)})+\frac{\sqrt{J_{(3,2)}J_{(4,3)}}}{J_{(3,2)}}(-\mathcal{I}^{(4,3)}+\mathcal{I}^{(4,3)}) \\
& = \mathcal{I}^{(3,2)}
\end{split}
\]
and we can see that \Theorem{classical} holds.

Using \Lemma{weight} to mitigate the equivariant coefficients, we can see how the diagrams of the various $\nu$ from \eqref{chevI} will index the ideal sheaves that cancel.
\begin{center}
\big(\hspace{2mm}\ydiagram{3,1+2} \hspace{2mm}-\hspace{2mm}
\ydiagram{4,1+2} \hspace{2mm}\big) + \big(\hspace{2mm}\ydiagram{4,1+2} \hspace{2mm}-\hspace{2mm}
\ydiagram{4,1+3} \hspace{2mm}\big) + \hspace{2mm}\ydiagram{4,1+3}\hspace{2mm} =\hspace{2mm} \ydiagram{3,1+2}
\end{center}
\end{example}

\section{Quantum Dual Basis}\label{sec:quantum results}
We will use \Theorem{classical} to prove a similar formula for the quantized ideal sheaves.
Recall the quantized ideal sheaves are defined to form a basis dual to the Schubert classes under the quantum $K$-metric on $QK_T(X)$. 
Before we prove this formula we will need a lemma that discusses the Gromov Witten invariants that will appear. 

\begin{lemma}\label{lemma:GWi}
Let $\lambda\in W^P$ and let $\kappa\in K^T(X)$. 
Then for all $d\geq 1$ we have 
$$I_d(\mathcal{O}_\lambda,\kappa,(1-\mathcal{O}^{s_\gamma}))=0.$$
\end{lemma}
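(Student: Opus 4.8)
The plan is to exploit the well-known vanishing mechanism that underlies the quantum Chevalley formula of \cite{buch.chaput.ea:qkchev}: for $d\geq 1$, the Gromov--Witten invariant $I_d(\kappa_1,\kappa_2,1-\mathcal{O}^{s_\gamma})$ can be rewritten as an Euler characteristic over a curve-neighborhood locus against the class of the ideal sheaf $J=1-\mathcal{O}^{s_\gamma}$, and this ideal sheaf is rigged precisely so that such pushforwards vanish. First I would use $\Gamma$-linearity of $I_d$ in each slot to reduce to the case where $\kappa$ itself is a Schubert class $\mathcal{O}^v$ (the class $\kappa\in K^T(X)$ is a $\Gamma$-combination of the $\mathcal{O}^v$), so it suffices to show $I_d(\mathcal{O}_\lambda,\mathcal{O}^v,1-\mathcal{O}^{s_\gamma})=0$ for all $d\geq 1$.

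Next I would apply the projection-formula/curve-neighborhood reduction for three-point $K$-theoretic Gromov--Witten invariants on cominuscule $X$: using $\ev_1,\ev_2,\ev_3:\Mb_{0,3}(X,d)\to X$ and the birationality/rational-connectedness statements from \cite{buch.chaput.ea:finiteness, buch.mihalcea:curve}, one has
\[
I_d(\mathcal{O}_\lambda,\mathcal{O}^v,1-\mathcal{O}^{s_\gamma}) \;=\; \chi_{\Gamma_d(X_\lambda, X^v)}\bigl((1-\mathcal{O}^{s_\gamma})|_{\Gamma_d(X_\lambda,X^v)}\bigr),
\]
i.e.\ the invariant equals the sheaf Euler characteristic over the curve-neighborhood variety $\Gamma_d(X_\lambda,X^v)$ of the restriction of $J=1-\mathcal{O}^{s_\gamma}$. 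Here I would invoke that $\Gamma_d(X_\lambda,X^v)$ is again a (translated) Schubert variety, or at worst a rationally connected $B$-stable subvariety, and that the relevant higher direct images of $\mathcal{O}$ along $\ev_3$ vanish, which is exactly the input used in \cite{buch.chaput.ea:qkchev} to prove the quantum Chevalley formula. Then the computation reduces to showing $\chi_Z(\mathcal{O}_Z)-\chi_Z(\mathcal{O}^{s_\gamma}|_Z)=0$ for $Z=\Gamma_d(X_\lambda,X^v)$ with $d\geq 1$.

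For the final step I would use that $1-\mathcal{O}^{s_\gamma}$ is (up to a twist by a character, which is irrelevant to Euler characteristics over a point) the class of the homogeneous line bundle $\mathcal{L}_{\varpi}^\vee$ associated to the fundamental weight dual to $\gamma$, as recalled after the corollary following \Theorem{classical}; equivalently $1-\mathcal{O}^{s_\gamma}=[\mathcal{O}(-D)]$ for the Schubert divisor $D$. Restricted to $Z$, since $d\geq 1$ the variety $Z$ meets the divisor $D$ in a hyperplane section that is still connected and has vanishing higher cohomology, so $\chi_Z(\mathcal{O}_Z(-D|_Z))=\chi_Z(\mathcal{O}_Z)-\chi_D(\mathcal{O}_{D\cap Z})=1-1=0$ by the structure-sheaf exact sequence $0\to\mathcal{O}_Z(-D)\to\mathcal{O}_Z\to\mathcal{O}_{D\cap Z}\to 0$ together with $\chi_Z(\mathcal{O}_Z)=\chi_{D\cap Z}(\mathcal{O}_{D\cap Z})=1$ (both being rational with rational singularities, or directly cohomologically trivial). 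The key point is that for $d\geq 1$ the curve neighborhood genuinely moves, so $D\cap Z\neq Z$ and the hyperplane-section argument applies; this is the one place where $d\geq 1$ is essential, and it is the main obstacle to get exactly right.

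The step I expect to be the main obstacle is the passage to the curve-neighborhood Euler characteristic, namely justifying that the pushforward $\ev_{3*}(\ev_1^*\mathcal{O}_\lambda\cdot\ev_2^*\mathcal{O}^v)$ equals $\mathcal{O}_{\Gamma_d(X_\lambda,X^v)}$ in $K^T(X)$ (no higher-cohomology correction terms), which relies on the rational-connectedness of fibers results of \cite{buch.chaput.ea:finiteness, buch.mihalcea:curve}; once that is in hand, the divisor computation is routine. I would organize the write-up as: (1) linearity reduction to Schubert classes; (2) cite the curve-neighborhood pushforward identity; (3) the three-term structure-sheaf sequence for the Schubert divisor restricted to $Z$ and conclude.
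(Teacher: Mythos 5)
Your first two steps are sound and use the same external input as the paper: linearity reduces to Schubert classes, and the pushforward identity $\ev_{3*}\bigl(\ev_1^*\mathcal{O}_\lambda\cdot\ev_2^*\mathcal{O}^v\bigr)=[\mathcal{O}_{\Gamma_d(X_\lambda,X^v)}]$ (which is the content of \cite[Cor. 4.2]{buch.chaput.ea:projected}, the very result cited in the paper's proof) converts the invariant into $\chi_X\bigl([\mathcal{O}_Z]\cdot(1-\mathcal{O}^{s_\gamma})\bigr)$ with $Z=\Gamma_d(X_\lambda,X^v)$. The gap is in your last step. Writing $1-\mathcal{O}^{s_\gamma}=[\mathcal{O}_X(-D)]$ up to a character twist, you must show $\chi_Z\bigl(\mathcal{O}_Z(-D|_Z)\bigr)=0$, and your argument asserts without proof that $Z\not\subset D$, that the scheme-theoretic intersection $D\cap Z$ is connected and reduced, and that $\chi(\mathcal{O}_{D\cap Z})=1$ (``cohomologically trivial''). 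None of this is routine: $Z$ is a projected Richardson-type variety, not a Schubert variety, its intersection with the opposite Schubert divisor is not covered by the results you cite, and if $\dim Z=1$ your claim forces $D\cdot Z=1$, since otherwise $\chi(\mathcal{O}_{D\cap Z})=D\cdot Z>1$ and the count $1-1=0$ fails. Since $\chi(\mathcal{O}_Z)=1$ is the easy part, the assertion $\chi(\mathcal{O}_{D\cap Z})=1$ is essentially equivalent to the lemma itself, so as written the proposal comes close to assuming what it must prove.

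The paper avoids any cohomological analysis of $D\cap Z$ by a different bookkeeping: it expands $\kappa=\sum_w\beta^w\mathcal{I}^w$ in the ideal-sheaf basis and places those classes in the third slot, so that $I_d(\mathcal{O}_\lambda,1,\mathcal{I}^w)$ and $I_d(\mathcal{O}_\lambda,\mathcal{O}^{s_\gamma},\mathcal{I}^w)$ are precisely the Schubert-basis coefficients of $[\mathcal{O}_{\Gamma_d(X_\lambda,X)}]$ and $[\mathcal{O}_{\Gamma_d(X_\lambda,X^{s_\gamma})}]$ by \cite[Cor. 4.2]{buch.chaput.ea:projected}. The only geometric input then required is the set-theoretic equality $\Gamma_d(X_\lambda,X^{s_\gamma})=\Gamma_d(X_\lambda,X)$ for $d\geq 1$, which holds because every curve of positive degree meets the ample Schubert divisor; no exact sequence, reducedness, or vanishing on $D\cap Z$ enters. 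To salvage your route you would have to prove the cohomological statements about $D\cap Z$ directly (for instance via compatible Frobenius splitting of projected Richardson varieties and the Schubert divisor, together with ruling out the low-dimensional degree issue), whereas the paper's rearrangement of the slots makes all of this unnecessary.
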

\begin{proof}
    First note that 
    $$I_d( \mathcal{O}_\lambda,\kappa, (1-\mathcal{O}^{s_\gamma}))=I_d(\mathcal{O}_\lambda,(1-\mathcal{O}^{s_\gamma}),\kappa)=I_d( \mathcal{O}_\lambda,1,\kappa)-I_d( \mathcal{O}_\lambda, \mathcal{O}^{s_\gamma},\kappa).$$
Furthermore, $\kappa\in K^T(X)$ and thus can be expressed in terms of the ideal sheaf basis, $$\kappa=\sum_w \beta^w\mathcal{I}^w$$ for $w\in W^P$ and $\beta^w\in K^T(\text{point})$.
Thus we can rewrite the Gromov Witten invariants using linearity
$$I_d(\mathcal{O}_\lambda,(1-\mathcal{O}^{s_\gamma}),\sum_w \beta^w\mathcal{I}^w)=\sum_w \beta^w \big( I_d(\mathcal{O}_\lambda,1,\mathcal{I}^w)-I_d(\mathcal{O}_\lambda,\mathcal{O}^{s_\gamma},\mathcal{I}^w)\big).$$
Note that all the curves in $X$ meet the Schubert divisor $X^{s_\gamma}$ and thus $\Gamma_d(X_\lambda,X^{s_\gamma})=\Gamma_d(X_\lambda)=\Gamma_d(X_\lambda,X)$ for all $d\geq1$. \cite[Cor. 4.2]{buch.chaput.ea:projected} then gives for $u,v\in W^P$
$$[\mathcal{O}_{\Gamma_d(X_u,X^v)}]=\sum_{w\in W^P} I_d(\mathcal{O}_u,\mathcal{O}^v,\mathcal{I}^w)\mathcal{O}^w.$$
Thus since $\Gamma_d(X_\lambda,X^{s_\gamma})=\Gamma_d(X_\lambda,X)$, the Gromov Witten invariants are also equal and we have
$$I_d(\mathcal{O}_\lambda,1,\mathcal{I}^w)-I_d(\mathcal{O}_\lambda,\mathcal{O}^{s_\gamma},\mathcal{I}^w)=0$$
for all $\lambda,w\in W^P$ and $d\geq 1$. 
Thus $$I_d( \mathcal{O}_\lambda,\kappa, (1-\mathcal{O}^{s_\gamma}))=\sum_w \beta^w \big( 0 \big)=0$$
for $d\geq 1.$
\end{proof}

Note that using linearity we can also replace $\mathcal{O}_\lambda$ with any element of $K^T(X)$, and the Gromov Witten invariants will vanish in the same way. 
The lemma is only written in this manner for simplicity within its proof and so that it can be applied in a direct manner in the next theorem. 
Also note that this lemma is a special case of the ``quantum equals classical" results from \cite{chaput.perrin:rationality}, as well as those from \cite{buch.mihalcea:quantum}, specifically Remark 6.5.

We can now prove the first of two formulae for the quantized ideal sheaves. 

\begin{thm}\label{thm:quantum}
Let $\mathcal{I}_q^\mu=\alpha^{\mu}\star(1-\mathcal{O}^{s_\gamma})$ where $\alpha^\mu$ is defined the same as in \Theorem{classical}. Then the collection $\{\mathcal{I}_q^\mu \ | \ \mu \in W^P\}$ forms the dual basis with respect to the Schubert basis, $\{\mathcal{O}_\lambda \ | \ \lambda \in W^P\}$, under the quantum $K$-metric for $QK_T(X)$ when $X$ is cominuscule, i.e. 
$$\left(\!\left(\mathcal{O}_\lambda,\mathcal{I}_q^\mu\right)\!\right)=\delta_{\lambda\mu}.$$
\end{thm}

\begin{proof}
By definition of the quantum $K$-metric 
$$\left(\!\left(\mathcal{O}_\lambda ,\alpha^\mu \star (1-\mathcal{O}^{s_\gamma})\right)\!\right)=\sum_{d\geq0}q^dI_d(\mathcal{O}_\lambda,\alpha^\mu, (1-\mathcal{O}^{s_\gamma}))=I_0( \mathcal{O}_\lambda,\alpha^\mu, (1-\mathcal{O}^{s_\gamma}))$$
where \Lemma{GWi} gives that the Gromov Witten invariants of degree $1$ or more vanish. 
Here the remaining degree $0$ Gromov Witten invariant is the sheaf Euler characteristic, and from \Theorem{classical}, $\mathcal{I}^\mu=\alpha^\mu \cdot (1-\mathcal{O}^{s_\gamma})$, so we have
$$I_0( \mathcal{O}_\lambda,\alpha^\mu, (1-\mathcal{O}^{s_\gamma}))=\int_X\mathcal{O}_\lambda\cdot\alpha^\mu\cdot (1-\mathcal{O}^{s_\gamma})=\int_X\mathcal{O}_\lambda\cdot\mathcal{I}^\mu=\delta_{\lambda\mu}.$$ 
Therefore $$\left(\!\left(\mathcal{O}_\lambda,\alpha^\mu\star (1-\mathcal{O}^{s_\gamma})\right)\!\right)=\delta_{\lambda\mu}.$$    
\end{proof}

Again, there will also be an opposite quantized ideal sheaf basis $\{\mathcal{I}_\mu^q \ | \ \mu \in W^P\}$, which is dual to the opposite Schubert basis, $\{\mathcal{O}^\lambda \ | \ \lambda \in W^P\}$, under the quantum $K$-metric on $QK_T(X)$. 
We can use the same technique as with the classical ideal sheaves to obtain a formula for $\mathcal{I}_\mu^q$ from $\mathcal{I}^\mu_q$ by making use of the left Weyl group action by the element $w_0$. 
See \cite[Section 8]{mihalcea.naruse.su:leftdemazure} for more information.

The Chevalley formula for $QK_T(X)$ given in \cite[Thm. 3.9]{buch.chaput.ea:qkchev} for $X$ cominuscule, allows us to write another formula for $\mathcal{I}_q^\mu$. 
From the Chevalley formula 
$$\mathcal{O}^{u}\star (1-\mathcal{O}^{s_\gamma})=\mathcal{O}^u\cdot (1-\mathcal{O}^{s_\gamma})-q\psi(\mathcal{O}^u\cdot (1-\mathcal{O}^{s_\gamma}))$$ 
where $\psi:K^T(X)\to K^T(X)$ is the homomorphism of $\Gamma$-modules defined by taking the first curve neighborhood, $\psi(\mathcal{O}^u)=\mathcal{O}^{u(-1)}$. 
Thus we can take sums of such products and use linearity to write products involving $\alpha^\mu$ instead of $\mathcal{O}^u$. 
The left hand side becomes $\mathcal{I}_q^\mu$ and the right hand side becomes $\mathcal{I}^\mu$ minus $q$ times the sum of the first curve neighborhoods of the Schubert classes that compose $\mathcal{I}^\mu$. 
This term can be thought of as a quantum correction that vanishes when restricting $q$ to $0$, to recover the classical ideal sheaf from the quantized ideal sheaf. 
Let $\psi \mathcal{I}^u$ denote this quantum correction. 

For a given $\mu\in W^p$, \eqref{chevII} gives that $\mathcal{I}^\mu=\sum_{\nu/\mu}(-1)^{l(\nu/\mu)}\mathcal{O}^\nu$ where $\nu/\mu$ is a rook strip \cite[Lemma 3.4]{buch.chaput.ea:qkchev}, thus \begin{equation}\label{qcorrection}\psi\mathcal{I}^\mu=\psi\sum_{\nu/\mu}(-1)^{l(\nu/\mu)}\mathcal{O}^\nu=\sum_{\nu/\mu}(-1)^{l(\nu/\mu)}\psi\mathcal{O}^{\nu}=\sum_{\nu/\mu}(-1)^{l(\nu/\mu)}\mathcal{O}^{\nu(-1)}\end{equation} by linearity of $\psi$.

In practice, the $\psi\mathcal{I}^\mu$ will often be 0. 
When $\psi\mathcal{I}^\mu$ is nonzero, \eqref{qcorrection} tells us that it is an alternating sum of Schubert classes that is cancellation free. 
This is because \eqref{chevII} gives that, for a fixed $\mu\in W^P$, the ideal sheaf $\mathcal{I}^\mu$ will never be $0$ as we count $\nu/\mu$ to a rook strip when $\nu=\mu$. 
Any other $\nu\in W^P$ where $\nu/\mu$ is a rook strip will then be distinct, so despite the alternating sign, the Schubert classes in the sum for $\mathcal{I}^\mu$ in \eqref{chevII} never cancel with each other. 
However, distinct $\nu\in W^P$ can have the same first curve neighborhood, $\nu(-1)$, thus it is possible to get the same Schubert class with both a positive and negative coefficient in the above formula for $\psi\mathcal{I}^\mu$. If this occurs the next theorem shows that the entire sum for $\psi\mathcal{I}^\mu$ becomes $0$.
Recall $z_1$ is used to find $\nu(-1)$. This element $z_1$ is defined in \Section{curve} and can be seen highlighted in \Table{tablez1}.

\begin{thm}\label{thm:Thm3}
For $\mu\in W^P$,  $\mathcal{I}^\mu_q=\mathcal{I}^{\mu}-q\psi\mathcal{I}^{\mu}.$ Furthermore if $z_1\nleqslant \mu$, then $\psi\mathcal{I}^\mu=0.$ If $z_1\leq \mu$ then $\psi\mathcal{I}^{\mu}=\mathcal{I}^{\mu(-1)}.$
\end{thm}

\begin{proof} As described above, we begin with the Chevalley formula in $QK_T(X)$
$$\mathcal{O}^{u}\star (1-\mathcal{O}^{s_\gamma})=\mathcal{O}^u\cdot (1-\mathcal{O}^{s_\gamma})-q\psi(\mathcal{O}^u\cdot (1-\mathcal{O}^{s_\gamma}))$$ 
and proceed by taking sums on both sides of the equation
$$\sum_\epsilon c^\epsilon \mathcal{O}^\epsilon \star (1-\mathcal{O}^{s_\gamma})= \sum_\epsilon c^\epsilon \mathcal{O}^\epsilon \cdot (1-\mathcal{O}^{s_\gamma}) - q \psi (\sum_\epsilon c^\epsilon \mathcal{O}^\epsilon \cdot (1-\mathcal{O}^{s_\gamma}))$$
where $\epsilon/\mu$ is a short skew shape, or $\epsilon=\mu$. Thus using \Theorem{quantum} on the right hand side, as well as \Theorem{classical} and the definition of $\psi\mathcal{I}^\mu$ on the left hand side, we conclude that
$$\mathcal{I}^\mu_q=\mathcal{I}^{\mu}-q\psi\mathcal{I}^{\mu}.$$

Next assume that $z_1\nleqslant\mu$. 
Fix $\beta \in z_1 \setminus \mu$ to be a box such that $\mu\cup\beta\in W^P$. 
Let $A^\mu=\{\lambda\in W^P: \lambda/\mu \text{ is a rook strip}\}$. 
Define $\phi: A^\mu \to A^\mu$ such that 
$$\phi(\lambda)= \left\{\begin{array}{lr}
        \lambda\setminus \beta, & \text{if } \beta\in \lambda\\
        \lambda\cup \beta , &  \text{if } \beta\notin \lambda
    \end{array}
   \right .$$
Once we have shown that $\phi$ is well-defined then it is an involution on $A^\mu$. 
$\phi$ is well-defined of because of the rook strip condition on $\lambda$; there are a finite number of boxes that can be added to $\mu$ that result in a $\lambda\in A^\mu$, and all such boxes are mutually incomparable.
Since $\beta$ is one such box, both $\lambda$ and $\phi(\lambda)$ lie in $A^{\mu}$. 
Since $\beta\in z_1$, $\psi(\mathcal{O}^{\phi(\lambda)})=\psi(\mathcal{O}^{\lambda})=\mathcal{O}^{\lambda(-1)}$.
 Further $l(\phi(\lambda))=l(\lambda)\pm 1$, so 
 $$(-1)^{l(\lambda/\mu)}\psi\mathcal{O}^{(\lambda)}+(-1)^{l(\phi(\lambda)/\mu)}\psi\mathcal{O}^{(\phi(\lambda))}=\mathcal{O}^{\lambda(-1)}-\mathcal{O}^{\lambda(-1)}=0.$$
 Since this holds for all $\lambda \in A^\mu$ we have that 
$$\psi\mathcal{I}^\mu=\sum_{\lambda\in A^\mu}(-1)^{l(\lambda/\mu)}\mathcal{O}^{\lambda(-1)}=0.$$

Assume $z_1\leq \mu$. Then from \eqref{qcorrection},
$$\psi\mathcal{I}^\mu=\sum_{\nu/\mu}(-1)^{l(\nu/\mu)}\mathcal{O}^{\nu(-1)}.$$ 
Since $z_1\leq \mu$, all the $\nu(-1)$ are distinct in this summation. 
From \eqref{chevII}, we have that 
$$\mathcal{I}^{\mu(-1)}=\sum_{\epsilon/\mu(-1)}(-1)^{l(\epsilon/\mu(-1))}\mathcal{O}^{\epsilon}.$$
Again, since $z_1\leq \mu$, there is then a bijection between the boxes that can be added to $\mu$ that result in a rook strip, and the boxes that can be added to $\mu(-1)$ that result in a rook strip. 
Thus the set of $\nu(-1)$ and the set of $\epsilon$ are identical. 
This bijection of boxes also ensures that the associated lengths $l(\nu/\mu)$ and $l(\epsilon/\mu(-1))$ are equal. 
Thus we conclude that the sums are identical, and we have that $\psi\mathcal{I}^\mu=\mathcal{I}^{\mu(-1)}$.

\end{proof}

 Recall in the minuscule cases, the sums 
$$\sum_\epsilon c^\epsilon \mathcal{O}^\epsilon \star (1-\mathcal{O}^{s_\gamma})= \sum_\epsilon c^\epsilon \mathcal{O}^\epsilon \cdot (1-\mathcal{O}^{s_\gamma}) - q \psi (\sum_\epsilon c^\epsilon \mathcal{O}^\epsilon \cdot (1-\mathcal{O}^{s_\gamma}))$$
reduce to the case where $\epsilon=\mu$. 
\Theorem{Thm3} is significant because both $\mathcal{I}^\mu$ and $\psi\mathcal{I}^\mu$ have expressions in terms of Schubert classes that can be found without calculating any products in either $K^T(X)$ or $QK_T(X)$. For a given $\mu\in W^p$, \eqref{chevII} and $\eqref{qcorrection}$ provide a way to write $\mathcal{I}^\mu_q$ in terms of Schubert classes. 
Since it is a simple combinatorial procedure to find $\mathcal{O}^{\nu(-1)}$ for a given $\nu\in W^P$, \Theorem{Thm3} gives a strictly combinatorial formula for $\mathcal{I}^\mu_q$ in terms that can be calculated using the diagram for $\mu$. 

We will demonstrate finding $\mathcal{I}^\mu_q$ in the next examples. 
Since we wish to work just with diagrams, for the remainder of the section the Weyl group elements $\mu,\nu\in W^P$ will be identified with their corresponding diagrams. 
Recall that in type A, where $X= G/P =Gr(k,n)$, $\nu(-1)$ will correspond to the partition $\nu$ with the top row and left most column removed, with each box moved one step north-west. 
In order for $\nu/\mu$ to be a rook strip, $\nu$ can add at most $1$ box to $\mu$ in each row and column.

\begin{example} Let $X=Gr(3,6)$, and let $\mu=(2,2,2)$. 
\begin{center}
    \ydiagram[*(white) \bullet]
    {2+0,2+0,2+0}
    *[*(white)]{2,2,2}
\end{center}
The only $\nu$ possible such that $\nu/\mu$ is a rook strip are $\nu_1=(2,2,2)$ and $\nu_2=(3,2,2)$. 
Here we have highlighted the box $\beta$ from \Theorem{Thm3} with a dot.
\begin{center}
    \ydiagram[*(white) \bullet]
    {2+0,2+0,2+0}
    *[*(white)]{2,2,2}
    \hspace{5mm}
    \ydiagram[*(white) \bullet]
    {2+1,2+0,2+0}
    *[*(white)]{3,2,2}
\end{center}
Thus $$\mathcal{I}^{(2,2,2)}=\mathcal{O}^{(2,2,2)}-\mathcal{O}^{(3,2,2)}.$$ 
We can see that $\nu_1(-1)=(1,1)$ and $\nu_2(-1)=(1,1)$, 
\begin{center}
\ydiagram{2,2,2}
    *[*(gray)]{2,1,1}
    $\xmapsto{\psi}$ 
    \ydiagram[*(white) \bullet]
    {1+0,1+0}
    *[*(white)]{1,1}
\hspace{5mm}
    \ydiagram[*(gray) \bullet]
    {2+1,2+0,2+0}
    *[*(white)]{0+0,1+1,1+1}
    *[*(gray)]{2,1,1}
    $\xmapsto{\psi}$ 
    \ydiagram[*(white) \bullet]
    {1+0,1+0}
    *[*(white)]{1,1}
\end{center}
and thus $$\psi\mathcal{I}^{(2,2,2)}=\mathcal{O}^{(1,1)}-\mathcal{O}^{(1,1)}=0.$$
Putting these two calculations together, we have that 
$$\mathcal{I}^{(2,2,2)}_q=\mathcal{O}^{(2,2,2)}-\mathcal{O}^{(3,2,2)}.$$
\end{example}

Still in Type A, $X=Gr(3,6),$ if it is not possible to add a box to the first row or the first column of $\mu$, then all the potential rook strips $\nu/\mu$ will give disctinct $\nu(-1)$ and the sum that composes $\psi\mathcal{I}^\mu$ will be identical to that of $\mathcal{I}^{\mu(-1)}$.

\begin{example}
Let $\mu=(3,2,1).$ 
\begin{center}
\ydiagram[*(white) \bullet]
    {3+0,2+0,1+0}
    *[*(white)]{3,2,1}
\end{center}
The possible $\nu$ such that $\nu/\mu$ is a rook strip are $\nu_1=(3,2,1)$, $\nu_2=(3,3,1)$, $\nu_3=(3,2,2)$, and $\nu_4=(3,3,2)$. Here we have highlighted all the boxes added to $\mu$ with a dot, but note that none of them are contained in $z_1$. 
Thus we know from \Theorem{Thm3} that $\psi\mathcal{I}^{(3,2,1)}\neq 0$.
\begin{center}
\ydiagram[*(white) \bullet]
    {3+0,2+0,1+0}
    *[*(white)]{3,2,1}
\hspace{5mm}
\ydiagram[*(white) \bullet]
    {3+0,2+1,1+0}
    *[*(white)]{3,3,1}
\hspace{5mm}
\ydiagram[*(white) \bullet]
    {3+0,2+0,1+1}
    *[*(white)]{3,2,2}
\hspace{5mm}
\ydiagram[*(white) \bullet]
    {3+0,2+1,1+1}
    *[*(white)]{3,3,2}   
\end{center}
Thus $\eqref{chevII}$ gives $$\mathcal{I}^{(3,2,1)}=\mathcal{O}^{(3,2,1)}-\mathcal{O}^{(3,3,1)}-\mathcal{O}^{(3,2,2)}+\mathcal{O}^{(3,3,2)}.$$ 
Taking the first curve neighborhood $\nu_1(-1)=(1)$, $\nu_2(-1)=(2)$, $\nu_3(-1)=(1,1)$ and $\nu_4(-1)=(2,1)$. 
The signs in $\psi\mathcal{I}^{(3,2,1)}$ are determined by the number of dots in the diagrams above.
\begin{center}
\ydiagram[*(white) \bullet]
    {3+0,2+0,1+0}
    *[*(gray)]{3,1,1}
    *[*(white)]{3,2,1}
   $\xmapsto{\psi}$
\ydiagram[*(white) \bullet]
    {1+0}
    *[*(white)]{1}
\hspace{2mm}
\ydiagram[*(white) \bullet]
    {3+0,2+1,1+0}
    *[*(gray)]{3,1,1}
    *[*(white)]{3,3,1}
   $\xmapsto{\psi}$
\ydiagram[*(white) \bullet]
    {1+1}
    *[*(white)]{2}
\hspace{2mm}
\ydiagram[*(white) \bullet]
    {3+0,2+0,1+1}
    *[*(gray)]{3,1,1}
    *[*(white)]{3,2,2}
   $\xmapsto{\psi}$
\ydiagram[*(white) \bullet]
    {1+0,0+1}
    *[*(white)]{1,1}
\hspace{2mm}
\ydiagram[*(white) \bullet]
    {3+0,2+1,1+1}
    *[*(gray)]{3,1,1}
    *[*(white)]{3,3,2}
   $\xmapsto{\psi}$
\ydiagram[*(white) \bullet]
    {1+1,0+1}
    *[*(white)]{2,1}
\end{center}
Thus we have $$\psi\mathcal{I}^{(3,2,1)}=\mathcal{O}^{(1)}-\mathcal{O}^{(2)}-\mathcal{O}^{(1,1)}+\mathcal{O}^{(2,1)}.$$
\Theorem{Thm3} then tells us that 
$$\mathcal{I}_q^{(3,2,1)}=\mathcal{O}^{(3,2,1)}-\mathcal{O}^{(3,3,1)}-\mathcal{O}^{(3,2,2)}+\mathcal{O}^{(3,3,2)}-q(\mathcal{O}^{(1)}-\mathcal{O}^{(2)}-\mathcal{O}^{(1,1)}+\mathcal{O}^{(2,1)}).$$
In this example, $z_1=(3,1,1)\leq (3,2,1)=\mu$ in $Gr(3,6)$, so we know $\psi\mathcal{I}^{\mu}$ should agree with $\mathcal{I}^{\mu(-1)}$. If we calculate $\mathcal{I}^{(1)}$ we then find that $(1)/(1)$, $(2)/(1)$, $(1,1)/(1)$, and $(2,1)/(1)$ give the only rook strips. We then have that 
$$\mathcal{I}^{(3,2,1)(-1)}=\mathcal{O}^{(1)}-\mathcal{O}^{(2)}-\mathcal{O}^{(1,1)}+\mathcal{O}^{(2,1)}=\psi\mathcal{I}^{(3,2,1)}.$$
Thus the partitions that index $\psi\mathcal{I}^{(3,2,1)}$ look identical to those of $\mathcal{I}^{(1)}$. 
\end{example}

In type C, where $X=G/P=LG(n,2n)$, something similar occurs.
Here $\nu(-1)\in W^P$  is the strict partition $\nu$ with the top row removed and boxes moved one step to the north-west.
Again, in order for $\nu/\mu$ to be a rook strip, $\nu$ can add at most $1$ box to $\mu$ in each row and column. 
If $\nu$ only adds a box to the top most row, $\nu(-1)$ and $\mu(-1)$ will be identical, even though $l(\nu/\mu)=1$ and $l(\mu/\mu)=0$, so the corresponding Schubert classes in $\psi\mathcal{I}^u$ will cancel. 
\begin{example} Let $X=LG(4,8)$, and let $\mu=(3,2)$. 
\begin{center}
    \ydiagram[*(white)\bullet]
    {3+0,2+0}
    *[*(white)]{3,1+2}
\end{center}
The possible $\nu$ such that $\nu/\mu$ is a rook strip are $\nu_1=(3,2)$, $\nu_2=(4,2)$, $\nu_3=(3,2,1)$, and $\nu_4=(4,2,1)$. 
Here we have highlighted all the boxes added to $\mu$ with a dot, but note that the highlighted box in the top row belongs to $z_1$. 
Thus we know from \Theorem{Thm3} that $\psi\mathcal{I}^{(3,2)}= 0$. 
We show the cancellation below.
\begin{center}
    \ydiagram[*(white)\bullet]
    {3+0,2+0}
    *[*(white)]{3,1+2}
    \hspace{5mm}
    \ydiagram[*(white)\bullet]
    {3+1,4+0}
    *[*(white)]{3,1+2}
    \hspace{5mm}
    \ydiagram[*(white)\bullet]
    {3+0,2+0,2+1}
    *[*(white)]{3,1+2,2+1}
    \hspace{5mm}
    \ydiagram[*(white)\bullet]
    {3+1,4+0,2+1}
    *[*(white)]{3,1+2,2+1}
    \hspace{5mm}
\end{center}
The above diagrams and \eqref{chevII} give that $$\mathcal{I}^{(3,2)}=\mathcal{O}^{(3,2)}-\mathcal{O}^{(4,2)}-\mathcal{O}^{(3,2,1)}+\mathcal{O}^{(4,2,1)}.$$ 
Taking the first curve neighborhood, we have $\nu_1(-1)=(2)$, $\nu_2(-1)=(2)$, $\nu_3(-1)=(2,1)$, and $\nu_4(-1)=(2,1)$. 
The sign in $\psi\mathcal{I}^{(3,2)}$ is then determined by the number of highlighted dots in the diagrams above. 
\begin{center}
    \ydiagram{0+0,1+2}
    *[*(gray)]{3,0}
\hspace{0.2mm}
    $\xmapsto{\psi}$
    \ydiagram{2}
\hspace{2mm}
    \ydiagram{0+0,1+2}
    *[*(gray)\bullet]{3+1,0}
    *[*(gray)]{3,0}
\hspace{0.2mm}
    $\xmapsto{\psi}$
    \ydiagram{2}
\hspace{2mm}
    \ydiagram[*(white)\bullet]
    {0+0,0+0,2+1}
    *[*(white)]{0+0,1+2,2+1}
    *[*(gray)]{3,0,0}
    \hspace{0.2mm}
    $\xmapsto{\psi}$
    \ydiagram[*(white)\bullet]
    {2+0,1+1}
    *[*(white)]{2,1+1}
\hspace{2mm}
    \ydiagram[*(white)\bullet]
    {3+0,3+0,2+1}
    *[*(white)]{0+0,1+2,2+1}
    *[*(gray)\bullet]{3+1,0,0}
    *[*(gray)]{3,0,0}
\hspace{0.2mm}
    $\xmapsto{\psi}$
    \ydiagram[*(white)\bullet]
    {2+0,1+1}
    *[*(white)]{2,1+1}
\end{center}
Thus $$\psi\mathcal{I}^{(3,2)}=\mathcal{O}^{(2)}-\mathcal{O}^{(2)}-\mathcal{O}^{(2,1)}+\mathcal{O}^{(2,1)}=0.$$
\Theorem{Thm3} then tells us that 
$$\mathcal{I}_q^{(3,2)}=\mathcal{O}^{(3,2)}-\mathcal{O}^{(4,2)}-\mathcal{O}^{(3,2,1)}+\mathcal{O}^{(4,2,1)}.$$
\end{example}

\begin{example}
Let $X=LG(4,8)$, and let $\mu=(4,2)$. 
\begin{center}
    \ydiagram[*(white)\bullet]
    {4+0,2+0,}
    *[*(white)]{4,1+2}
\end{center}
The possible $\nu$ such that $\nu/\mu$ is a rook strip are $\nu_1=(4,2)$, $\nu_2=(4,3)$, $\nu_3=(4,2,1)$, and $\nu_4=(4,3,1)$. 
Here we have highlighted all the boxes added to $\mu$ with a dot, but note that none of them are contained in $z_1$. 
Thus we know from \Theorem{Thm3} that $\psi\mathcal{I}^{(4,2)}\neq 0$.
\begin{center}
    \ydiagram[*(white)\bullet]
    {4+0,2+0}
    *[*(white)]{4,1+2}
    \hspace{5mm}
    \ydiagram[*(white)\bullet]
    {4+0,3+1}
    *[*(white)]{4,1+3}
    \hspace{5mm}
    \ydiagram[*(white)\bullet]
    {4+0,2+0,2+1}
    *[*(white)]{4,1+2,2+1}
    \hspace{5mm}
    \ydiagram[*(white)\bullet]
    {4+0,3+1,2+1}
    *[*(white)]{4,1+3,2+1}
    \hspace{5mm}
\end{center}
Thus $$\mathcal{I}^{(4,2)}=\mathcal{O}^{(4,2)}-\mathcal{O}^{(4,3)}-\mathcal{O}^{(4,2,1)}+\mathcal{O}^{(4,3,1)}.$$
Taking the first curve neighborhood, we have $\nu_1(-1)=(2)$, $\nu_2(-1)=(3)$, $\nu_3(-1)=(2,1)$, and $\nu_4(-1)=(3,1)$. 
Again, the sign in $\psi\mathcal{I}^{(4,2)}$ is determined by the number of dots in the diagrams above.
\begin{center}
    \ydiagram{0,1+2}
    *[*(gray)]{4}
    \hspace{0.5mm}
    $\xmapsto{\psi}$
    \ydiagram{2}
\hspace{2mm}
    \ydiagram[*(white)\bullet]
    {4+0,3+1}
    *[*(white)]{0,1+2}
    *[*(gray)]{4}
    $\xmapsto{\psi}$
    \ydiagram[*(white)\bullet]
    {2+1}
    *[*(white)]{2}
\hspace{2mm}
    \ydiagram[*(white)\bullet]
    {4+0,3+0,2+1}
    *[*(white)]{0,1+2,2+1}
    *[*(gray)]{4}
    $\xmapsto{\psi}$
    \ydiagram[*(white)\bullet]
    {2+0,1+1}
    *[*(white)]{2}
\hspace{1mm}
    \ydiagram[*(white)\bullet]
    {4+0,3+1,2+1}
    *[*(white)]{0,1+3,2+1}
    *[*(gray)]{4}
    $\xmapsto{\psi}$
    \ydiagram[*(white)\bullet]
    {2+1,1+1}
    *[*(white)]{3,1+1}
\end{center}
Thus $$\psi\mathcal{I}^{(4,2)}=\mathcal{O}^{(2)}-\mathcal{O}^{(3)}-\mathcal{O}^{(2,1)}+\mathcal{O}^{(3,1)}$$
and \Theorem{Thm3} gives that 
$$\mathcal{I}_q^{(4,2)}=\mathcal{O}^{(4,2)}-\mathcal{O}^{(4,3)}-\mathcal{O}^{(4,2,1)}+\mathcal{O}^{(4,3,1)}-q(\mathcal{O}^{(2)}-\mathcal{O}^{(3)}-\mathcal{O}^{(2,1)}+\mathcal{O}^{(3,1)}).$$
In this example, $z_1=(4)\leq (4,2)=\mu$ in $LG(4,8)$, so we know $\psi\mathcal{I}^{\mu}$ should agree with $\mathcal{I}^{\mu(-1)}$. If we calculate $\mathcal{I}^{(2)}$ we then find that $(2)/(2)$, $(3)/(2)$, $(2,1)/(2)$, and $(3,1)/(2)$ give the only rook strips. We then have that 
$$\mathcal{I}^{(4,2)(-1)}=\mathcal{O}^{(2)}-\mathcal{O}^{(3)}-\mathcal{O}^{(2,1)}+\mathcal{O}^{(3,1)}=\psi\mathcal{I}^{(4,2)}.$$
Thus the partitions that index $\psi\mathcal{I}^{(4,2)}$ look identical to those of $\mathcal{I}^{(2)}$.

\end{example}

Furthermore, this description that $\mathcal{I}^\mu_q=\mathcal{I}^{\mu}-q\psi\mathcal{I}^{\mu}$ from \Theorem{Thm3} also gives another interpretation of the Chevalley formula. 
Recall from \eqref{chevI} the classical product can be written as an alternating sum of classical ideal sheaves \cite[Thm. 3.6]{buch.chaput.ea:qkchev}: 
$$\mathcal{O}^\mu\cdot(1-\mathcal{O}^{s_\gamma})=\sum_{\nu/\mu}(-1)^{l(\nu/\mu)}\sqrt{J_\mu J_\nu}\mathcal{I}^\nu$$
where $\nu/\mu$ is a short rook strip.
The quantum Chevalley formula \cite[Thm. 3.9]{buch.chaput.ea:qkchev}, includes this as its first term, alongside the quantum correction. 
$$\mathcal{O}^{\mu}\star (1-\mathcal{O}^{s_\gamma})=\mathcal{O}^\mu\cdot (1-\mathcal{O}^{s_\gamma})-q\psi(\mathcal{O}^\mu\cdot (1-\mathcal{O}^{s_\gamma}))$$
$$=\sum_{\nu/\mu}(-1)^{l(\nu/\mu)}\sqrt{J_\mu J_\nu}\mathcal{I}^\nu-\sum_{\nu/\mu}(-1)^{l(\nu/\mu)}\sqrt{J_\mu J_\nu}q\psi\mathcal{I}^\nu$$
$$=\sum_{\nu/\mu}(-1)^{l(\nu/\mu)}\sqrt{J_\mu J_\nu}(\mathcal{I}^\nu -q\psi\mathcal{I}^\nu)$$
where we use linearity to move the $q$ and $\psi$ to be inside the second summation. 
Thus we have the following corollary. 

\begin{cor}\label{cor:cor2} Let $\mu\in W^P$, then
     $$\mathcal{O}^{\mu}\star (1-\mathcal{O}^{s_\gamma})=\sum_{\nu/\mu \text{ short rook strip}}(-1)^{l(\nu/\mu)}\sqrt{J_\mu J_\nu}\mathcal{I}_q^{\nu}$$
\end{cor}

This corollary gives a new way to express the Chevalley formula in $QK_T(X)$ in terms of the quantized ideal sheaves. 

\section{An Application}\label{sec:application}
Let $X=Gr(k,n)$, the Type A Grassmannian, for the remainder of this section. 
From \Theorem{quantum} we have that for $\lambda\in W^P$, $\mathcal{I}_q^\lambda=\frac{1}{J_\lambda}\mathcal{O}^\lambda\star (1-\mathcal{O}^{s_\gamma})$. 
Also, from \cite[Sec. 4]{buch.chaput.ea:qkchev} we have a $T$-equivariant exact sequence 
\begin{equation}\label{E:Lvarpi} 0\to \mathcal{L}^{\vee}_{\varpi_k} \otimes\mathbb{C}_{-\omega_\gamma}\to \mathcal{O}_X\to \mathcal{O}_{X^{s_\gamma}}\to 0\end{equation}
where $\mathcal{L}^{\vee}_{\varpi_k}=\det\mathcal{S}$ is the top exterior power of the tautological bundle on $X$, and thus $(1-\mathcal{O}^{s_\gamma})=\det\mathcal{S}\otimes \mathbb{C}_{-\omega_\gamma}$. 
Let $\mathcal{Q}=\mathbb{C}^n/\mathcal{S}$ be the quotient bundle. 
Then from \cite[Thm. 6.1]{gu.mihalcea.sharpe.zou:quantum} we can write $$\wedge^i\mathcal{S}\star\det\mathcal{Q}=(1-q)\wedge^{k-i}\mathcal{S}^{*}\cdot \det(\mathbb{C}^n).$$
We can then let $i=k$ to give the top exterior power of $\mathcal{S}$ on the left hand side, and $\wedge^0\mathcal{S}^{*}=[\mathcal{O}_X]=1\in QK_T(X)$ on the right hand side, to get that 
$$\det\mathcal{S}\star\det\mathcal{Q}=(1-q)\cdot\det(\mathbb{C}^n).$$
Using this equality, the definition of the quantum product in terms of the quantum $K$-metric, and \Theorem{quantum}, we can calculate the product in the following theorem. 
A similar argument should be possible in other cominuscule cases beyond the Type A Grassmannian, but there are not yet known formulae for $\det\mathcal{S}\star\det\mathcal{Q}$ in these other cases. 
There are conjectured formulas for $\det\mathcal{S}\star\det\mathcal{Q}$ in the Type C Lagrangian Grassmannian within \cite{gu.mihalcea.sharpe.zou:symplectic}, which have currently not been mathematically proven.

\begin{thm}\label{thm:application} 
Let $\mu\in W^P$. Then 
$$\det\mathcal{Q}\star \mathcal{O}^\mu=\sum_{\lambda\in W^P} [\mathbb{C}_{-\lambda.\omega_\gamma}]\det(\mathbb{C}^n)q^{d(\mu,\lambda)}\mathcal{O}^{\lambda}$$
where $d(\mu,\lambda)=dist_X(\mu,\lambda)$ is the minimal degree $d$ for which $q^d$ occurs in the product $X^\mu \star X_\lambda$ in the quantum cohomology ring of $X$.

\end{thm}

Note that if we reduce to the nonequivariant quantum $K$-theory ring, $QK(X)$, the coefficient for $\mathcal{O}^{\lambda}$ is simply $q^{d(\mu,\lambda)}$. 
This is coming from the formula for the pairing, $\left(\!\left(\mathcal{O}^u,\mathcal{O}_v\right)\!\right)=\frac{q^{d(u,v)}}{1-q}$, from \cite{buch.chaput.ea:euler} as discussed in \Section{Quantum1}. Also, nonequivariantly $X_\lambda=X^{(\lambda^{\vee})}$, and so $dist_X(\mu,\lambda)$ is the minimal degree $d$ for which $q^d$ occurs in the product $X^{\mu}\star X^{(\lambda^{\vee})}$ in $QH(X)$. 

\begin{proof}
Let $c_{\mu\lambda}$ be the structure constants for the above quantum product, i.e., let 
$$\det\mathcal{Q}\star \mathcal{O}^\mu=\sum_{\lambda\in W^P} c_{\mu\lambda}\mathcal{O}^{\lambda}.$$
Then we must have 
$$c_{\mu\lambda}=\left(\!\left(\det\mathcal{Q}\star\mathcal{O}^\mu,\mathcal{I}^q_\lambda\right)\!\right)$$
since $\mathcal{I}^q_\lambda=(\mathcal{O}_\lambda)^\vee\in QK_T(X)$ is the element dual to $\mathcal{O}^\lambda$. 
Recall that the left Weyl group action yields $w_0.\mathcal{I}^q_{\lambda}=\mathcal{I}_q^{(\lambda^{\vee})}$. For $X=Gr(k,n)$ we have $$\mathcal{I}_q^{(\lambda^{\vee})}=\frac{1}{J_{(\lambda^{\vee})}}\mathcal{O}^{(\lambda^{\vee})}\star \det\mathcal{S}\otimes \mathbb{C}_{-\omega_\gamma}=[\mathbb{C}_{-(\lambda^{\vee}).\omega_\gamma}]\mathcal{O}^{(\lambda^{\vee})}\star \det \mathcal{S}.$$
Thus from applying the action by $w_0$ to the structure constants we have 
$$w_0.c_{\mu\lambda}=w_0.\left(\!\left(\det \mathcal{Q}\star \mathcal{O}^\mu,\mathcal{I}_\lambda^q\right)\!\right)=\left(\!\left(w_0.(\det\mathcal{Q}\star\mathcal{O}^{\mu}),w_0.\mathcal{I}^q_\lambda\right)\!\right)$$
Also, $w_0.\det\mathcal{Q}=\det\mathcal{Q}$ since $\mathcal{Q}$ is a homogeneous bundle associated to a representation \cite[Lemma 5.4(b)]{gu.mihalcea.sharpe.zou:quantum}. Thus we have 

\[ \begin{split} w_0.c_{\mu\lambda} & =\left(\!\left(\det\mathcal{Q}\star\mathcal{O}_{(\mu^{\vee})},[\mathbb{C}_{-(\lambda^{\vee}).\omega_\gamma}]\mathcal{O}^{(\lambda^{\vee})}\star \det\mathcal{S}\right)\!\right) \\
& =[\mathbb{C}_{-(\lambda^{\vee}).\omega_\gamma}]\left(\!\left(\mathcal{O}^{(\lambda^{\vee})},\mathcal{O}_{(\mu^{\vee})}\star \det\mathcal{S}\star\det\mathcal{Q}\right)\!\right) \\
& =[\mathbb{C}_{-(\lambda^{\vee}).\omega_\gamma}](1-q)\det(\mathbb{C}^n)\left(\!\left(\mathcal{O}^{(\lambda^{\vee})},\mathcal{O}_{(\mu^{\vee})}\right)\!\right)\\
& =[\mathbb{C}_{-(\lambda^{\vee}).\omega_\gamma}]\det(\mathbb{C}^n)(1-q)\frac{q^{d(\lambda^\vee,\mu^\vee)}}{(1-q)} \\
& = [\mathbb{C}_{-(\lambda^{\vee}).\omega_\gamma}]\det(\mathbb{C}^n)q^{d(\lambda^\vee,\mu^\vee)}
\end{split}
\] 
where $d(\lambda^{\vee},\mu^\vee)=d(\mu,\lambda)$ because the minimal power of $q$ can be calculated nonequivariantly, and nonequivariantly $X^{(\lambda^{\vee})}\star X_{(\mu^{\vee})}=X^{\mu}\star X_{\lambda}$.
Therefore
$$c_{\mu\lambda}=w_0.w_0.c_{\mu\lambda}=w_0.[\mathbb{C}_{-(\lambda^{\vee}).\omega_\gamma}]\det(\mathbb{C}^n)q^{d(\mu,\lambda)}$$
$$c_{\mu\lambda}=[\mathbb{C}_{-w_0(\lambda^{\vee}).\omega_\gamma}]\det(\mathbb{C}^n)q^{d(\mu,\lambda)}=[\mathbb{C}_{-\lambda.\omega_\gamma}]\det(\mathbb{C}^n)q^{d(\mu,\lambda)}.$$
\end{proof}

We now proceed to give a simpler description of the equivariant coefficient $[\mathbb{C}_{-\lambda.\omega_\gamma}]\det(\mathbb{C}^n)$ from \Theorem{application}. 
Take $X=Gr(k,n)=GL(n)/P'$ where $P'$ is the corresponding maximal parabolic subgroup of $GL(n)$. 
We let $T'\subset GL(n)$ denote the maximal torus.
We wish to work with the characters of $T'$ as opposed to those of $T\subset SL(n)$ which were used for all calculations preceding this section. 
Let $\iota: SL(n)\hookrightarrow GL(n)$ denote the inclusion map. Then since $SL(n)/P = GL(n)/P'$, the functoriality of equivariant $K$-theory gives an induced map $id^{*}:K^{T'}(Gr(k,n))\rightarrow K^T(Gr(k,n))$.
The characters of $T'$ are given by $\diag(t_1,\ldots,t_n)\mapsto t_1^{m_1}\cdots t_n^{m_n}$ where $m_1,\ldots,m_n$ are integers. 
Thus the multiplicative group of characters of $T'$ can be identified with the additive group $\mathbb{Z}^n$. 

The roots of the Lie algebra of the torus $T$ are $\alpha_i=e_{i+1}-e_i$, for $1 \le i \le n-1$ \cite[p.64]{humphries:lie}, and they span a 
hyperplane in the Lie algebra of $T'$. The latter may 
be identified to $\mathbb{R}^n$, equipped with the standard inner product.
For $1 \le i \le n-1$, we identify the fundamental weight $\varpi_i$ with the vector $(1, \ldots ,1 , 0, \ldots, 0)$ with $i$ entries of $1$.
We also have another weight for $T'$, the determinant, $\varpi_n=(1,\ldots,1)$ with $n$ entries of $1$.  
The map $id^{*}$ must send the determinant to the identity, $\varpi_n\mapsto 1$, as $T$ is a subgroup of $SL(n)$. 
We then have that $id^{*}$ sends the vectors $(1,1, \ldots, 1, 0, \ldots, 0)$ to the corresponding fundamental weights of $T$. 
Furthermore, the restriction $id^{*}:K^{T'}(Gr(k,n))\rightarrow K^T(Gr(k,n))$ preserves the action of the Weyl group on the associated pairings of the root systems.

We take the partition associated to $\lambda\in W^P$, $(\lambda_1,\ldots \lambda_k)$, then the associated $w_\lambda \in S_n$ is the unique Grassmannian permutation (has a single descent at position $k$) that satisfies $w_\lambda(i) = i+ \lambda_{k-i+1}$, for $1 \le i \le k$. 
Let $I_\lambda:=\{w_\lambda(i) \ | \ 1 \le i\le k\}$, and let $J_\lambda$ be the complement $\{1,\ldots,n\} \setminus I_\lambda$.
Let $j_l$ for $1\le l\le (n-k)$ be the elements of $J_\lambda$ written in increasing order, and similarly define $i_m\in I_\lambda$ for $1\le m\le k$. The one-line notation for $w_\lambda$ is then given by $w_\lambda=i_1\cdots i_k j_1\cdots j_{n-k}$.
In order to find the weight $\lambda.\omega_\gamma$ we then simply take the natural action of $S^n$ on $\mathbb{Z}^n$ by permuting the entries of $\omega_\gamma=(1,\ldots,1,0,\ldots,0)=\varpi_k$ by $w_\lambda$. 
Since $\omega_\gamma$ has $1$ in its first $k$ entries, we then have that $w_\lambda$ permutes $\omega_\gamma$ to have $1$ in the $i_m$th entry for all $1 \leq m\leq k$, and $0$ elsewhere. 
Note that this combinatorial correspondence between $\lambda\in W^P$ and the weight $\lambda.\omega_\gamma$ depends on our convention of letting our Schubert classes be indexed by partitions whose length denote the codimension in $X$. 

Let $T_i=e^{m_i}$ denote characters corresponding to a basis for the Lie algebra of $T'$. 
Then $K^{T'}({\text{point}})=\mathbb{Z}[T_1^{\pm 1},\ldots,T_n^{\pm1}]$ is a Laurent polynomial ring in these characters. 
We then have that $\det(\mathbb{C}^n)=\prod_{i=1}^n T_i \in K^{T'}({\text{point}})$. 
Furthermore, $[\mathbb{C}_{\omega_\gamma}]=\prod_{i=1}^k T_i$, coming from, $(1,\ldots,1,0,\ldots,0)$, the $n$-tuple in $\mathbb{Z}^n$ associated to $\omega_\gamma$. 
Based on our description of $\lambda.\omega_\gamma$ in the preceding paragraph, we then have that $[\mathbb{C}_{\lambda.\omega_\gamma}]=\prod_{m=1}^k T_{i_m}$, where $w_\lambda=i_1\cdots i_k j_1\cdots j_{n-k}$ and $i_m\in I_\lambda$. 
Since $[\mathbb{C}_{-\lambda.\omega_\gamma}]$ is the multiplicative inverse of $[\mathbb{C}_{\lambda.\omega_\gamma}]$, 
we have that $$[\mathbb{C}_{-\lambda.\omega_\gamma}]\det(\mathbb{C}^n)=\frac{\prod_{a=1}^n T_a}{\prod_{m=1}^k T_{i_m}}=\prod_{l=1}^{n-k} T_{j_l}$$
where $j_l\in J_\lambda$. 

As a brief example, let $X=Gr(3,7)$ and take the partition $\mu=(3,2)$. 
We have that $w_\mu(1)=1+\mu_{3-1+1}=1+\mu_{3}=1$
. Similarly $w_\mu(2)=2+\mu_{3-2+1}=2+\mu_{2}=4$, and $w_\mu(3)=3+\mu_{3-3+1}=3+\mu_{1}=6$.
Thus $w_\mu=1462357$ and $w_\mu$ acts on $\omega_\gamma$ to give $\mu.\omega_\gamma=(1,0,0,1,0,1,0)$, with the entries of $1$ corresponding to the elements of $I_\mu$. 
Thus we have that $[\mathbb{C}_{\mu.\omega_\gamma}]=T_1T_4T_6$ and $[\mathbb{C}_{-\mu.\omega_\gamma}]\det(\mathbb{C}^n)=T_2T_3T_5T_7$. 
Here we can see the indices for the characters in $[\mathbb{C}_{-\mu.\omega_\gamma}]\det(\mathbb{C}^n)$ are given by the elements of  $J_\mu$. 

From the description of $[\mathbb{C}_{-\lambda.\omega_\gamma}]\det(\mathbb{C}^n)$ in terms of characters of $T'$ we have the following corollary. 

\begin{cor}\label{cor:detQ} Let $\mu\in W^P$. Then 
$$\det\mathcal{Q}\star \mathcal{O}^\mu= \sum_{\lambda\in W^P}(\prod_{l=1}^{n-k}T_{j_l})q^{d(\mu,\lambda)}\mathcal{O}^{\lambda}$$
    in $QK_T(Gr(k,n))$, where the $j_l$ correspond to the last $n-k$ entries of the one line notation for the Grassmannian permutation $w_\lambda$, and $d(\mu,\lambda)=dist_X(\mu,\lambda)$ is the minimal degree $d$ for which $q^d$ occurs in the product $X^\mu \star X_\lambda$ in the quantum cohomology ring of $X$.
\end{cor}

\begin{example}
A special case of \Theorem{application} is when we multiply $\det\mathcal{Q}$ by the identity in $QK_T(X)$, the Schubert class indexed by the empty partition $\mu=\varnothing$.
Since $X^{\varnothing}\star X_{\lambda}=X_\lambda$, there are no powers of $q$ appearing in this product, and thus $d(\varnothing,\lambda)=0$ for all $\lambda\in W^P$. 
Therefore $$\det\mathcal{Q}=\det\mathcal{Q}\star\mathcal{O}^{\varnothing}= \sum_{\lambda\in W^P}[\mathbb{C}_{-\lambda.\omega_\gamma}]\det(\mathbb{C}^n)\mathcal{O}^{\lambda}=\sum_{\lambda\in W^P}(\prod_{l=1}^{n-k}T_{j_l})\mathcal{O}^{\lambda}. $$  
\end{example}

We next provide another example of \Theorem{application}, which demonstrates a notable property of the multiplication by $\det \mathcal{Q}$. 
Positivity of $QK_T(X)$ \cite{buch.chaput.ea:positivity} tells us that general products of Schubert classes will result in alternating sums of Schubert classes, but this multiplication by $\det\mathcal{Q}$ results in coefficients that are truly positive in every case. 

\begin{example}
For the sake of simplicity and clarity we begin by working nonequivariantly, so we restrict $[\mathbb{C}_{-\lambda.\omega_\gamma}]\det(\mathbb{C}^n)\mapsto 1$ in \Theorem{application}. 
Also nonequivariantly, there is an isomorphism from $QH(X)\to QH(X)$ such that $X_\nu\mapsto X^{(\nu^{\vee})}$, allowing us to calculate $dist_X(\mu,\lambda)$ using only the opposite Schubert basis. 
Let $X=Gr(2,4)$, and let $\mu=(1)$. 
Then 
$$X^{(1)}\star X^{\varnothing}  =X^{(1)}$$
$$X^{(1)}\star X^{(1)}=X^{(1,1)}+X^{(2)}$$
$$X^{(1)}\star X^{(1,1)}=X^{(2,1)}$$
$$X^{(1)}\star X^{(2)}=X^{(2,1)}$$
$$X^{(1)}\star X^{(2,1)}=qX^{\varnothing}+X^{(2,2)}$$
$$X^{(1)}\star X^{(2,2)}=qX^{(1)}$$
Thus this last calculations gives $dist((1),(2,2)^{\vee})=dist((1),\varnothing)=1$ and we can see $dist((1),\lambda)=0$ for all other $\lambda$. 
Thus, nonequivariantly, \Theorem{application} tells us that 
$$\det\mathcal{Q}\star\mathcal{O}^{(1)}=\mathcal{O}^{(2,2)}+\mathcal{O}^{(2,1)}+\mathcal{O}^{(2)}+\mathcal{O}^{(1,1)}+\mathcal{O}^{(1)}+q\mathcal{O}^{\varnothing}.$$

To find the equivaraint coefficients, $[\mathbb{C}_{-\lambda.\omega_\gamma}]\det(\mathbb{C}^n)$, we write $w_\lambda$ for each $\lambda\in W^P$. 

For $\lambda=\varnothing$ we have $w_\lambda=1234$
and thus $[\mathbb{C}_{-\varnothing.\omega_\gamma}]\det(\mathbb{C}^n)=T_3T_4$.

For $\lambda=(1)$ we have $w_\lambda=1324$
and thus $[\mathbb{C}_{-(1).\omega_\gamma}]\det(\mathbb{C}^n)=T_2T_4$.

For $\lambda=(1,1)$ we have $w_\lambda=2314$
and thus $[\mathbb{C}_{-(1,1).\omega_\gamma}]\det(\mathbb{C}^n)=T_1T_4$.

For $\lambda=(2)$ we have $w_\lambda=1423$
and thus $[\mathbb{C}_{-(2).\omega_\gamma}]\det(\mathbb{C}^n)=T_2T_3$.

For $\lambda=(2,1)$ we have $w_\lambda=2413$
and thus $[\mathbb{C}_{-(2,1).\omega_\gamma}]\det(\mathbb{C}^n)=T_1T_3$.

For $\lambda=(2,2)$ we have $w_\lambda=3412$
and thus $[\mathbb{C}_{-(2,2).\omega_\gamma}]\det(\mathbb{C}^n)=T_1T_2$. 

We conclude from \Cor{detQ} that 

$$\det\mathcal{Q}\star\mathcal{O}^{(1)}=T_1T_2\mathcal{O}^{(2,2)}+T_1T_3\mathcal{O}^{(2,1)}+T_2T_3\mathcal{O}^{(2)}+T_1T_4\mathcal{O}^{(1,1)}+T_2T_4\mathcal{O}^{(1)}+T_3T_4q\mathcal{O}^{\varnothing}.$$

\end{example}

\providecommand{\bysame}{\leavevmode\hbox to3em{\hrulefill}\thinspace}
\providecommand{\MR}{\relax\ifhmode\unskip\space\fi MR }
\providecommand{\MRhref}[2]{%
  \href{http://www.ams.org/mathscinet-getitem?mr=#1}{#2}
}
\providecommand{\href}[2]{#2}

\end{document}